\documentclass[]{amsart}
\usepackage{color,latexsym,amssymb,amsmath,amsthm}
\usepackage[dvips]{graphics}

\definecolor{orange}{rgb}{1,0.5,0}

\bibliographystyle{plain}

\newtheorem{thm}{Theorem}
\newtheorem{coro}[thm]{Corollary}
\newtheorem{lemma}[thm]{Lemma}
\newtheorem{propo}[thm]{Proposition}
\theoremstyle{definition}
\newtheorem{defn}{Definition}

\title{A discrete Gauss-Bonnet type theorem}
\author{Oliver Knill}
\date{February 11, 2010}
\address{
        Department of Mathematics \\
        Harvard University \\
        Cambridge, MA, 02138 
        }

\subjclass{Primary:   05C10 , 57M15 }
\keywords{Graph theory, Gauss-Bonnet, Curvature}

\begin{document}
\maketitle

\begin{abstract}
We discuss a curvature theorem for subgraphs of the flat triangular tessellations of the plane. These 
graphs play the analogue of "domains" in two dimensional Euclidean space. We show that the 
Pusieux curvature $K(p) = 2 |S_1(p)| - |S_2(p)|$ satisfies
$\sum_{p \in \delta G}  K(p) = 12 \chi(G)$, where $\chi(G)$ is the Euler characteristic of the graph, 
$\delta G$ is the boundary of $G$ and where $|S_r|$ the arc length of the sphere of radius $r$ in $G$. 
This formula can be seen as a discrete Gauss-Bonnet formula or Hopf Umlaufsatz.
\end{abstract}

\begin{center}
Dedicated to Ernst Specker to his 90th birthday. 
\end{center}

%Whether in introductory linear algebra courses or in logic classes, 
%Specker would regularly mention curious relations between the discrete and continuum. 

\section{Introduction}

For a domain $D$ in the plane with smooth boundary $C$, the {\bf Gauss-Bonnet formula} or {\bf Umlaufsatz}
$\int_{C} K(s) \; ds = 2\pi \chi(D)$ relates the curvature $K(s)$ of the boundary curve
with the Euler characteristic $\chi$ of 
the region. For a simply connected region $G$ for which the boundary is a simple closed
curve, the total boundary curvature is $2\pi$. This Gauss-Bonnet type result is a form of {\bf Hopf's Umlaufsatz} 
and relates a differential geometric quantity, the boundary curvature, 
with a topological invariant, the Euler characteristic. 
In differential geometry, curvature needs a differentiable structure, while Euler characteristic does not. 
It is the transcending property between different mathematical branches which makes Gauss-Bonnet 
type results interesting.  \\

We prove here a discrete version of a "Hopf Umlaufsatz" \cite{hopf35} which is of combinatorial nature;
curvature is an integer. The result applies to special two dimensional graphs which are part of a flat two dimensional 
background graph $X$, where the 
dimensionality is defined inductively. While the Euler characteristic is a topological notion,
we need "smoothness assumptions" to equate the total boundary curvature with the Euler characteristic. \\

The curvature, we consider here is $K(p) = 2 |S_1| - |S_2|$, where $S_r(p)$ is the arc length of the sphere $S_r(p)$
at the point $p$.  The sphere $S_r$ is a subgraph of $G$ with vertices of all points of distance $r$ and edges 
consisting of pairs $(q,q')$ in $S_r(p)$ such that $q$ and $q$ have distance $1$. 
As we will explore elsewhere, for many compact two-dimensional graphs $G$ without boundary, like triangularizations
of polyhedra with 5 or 6 faces, the integral of the curvature
$K(p) = 2 |S_1|-|S_2|$ over the entire graph is $60 \chi(D)$. The "smoothness" assumptions are more subtle than 
corresponding results for $K_1(p) = 6 - |S_1|$.  For the later, the result $\sum_{p \in G} K_1(p) = \chi(G)$ is 
essentially a reformulation of Euler's formula and holds for any "two dimensional graph" with or without boundary.
We will look at a relation between the "first order curvature" $K_1$ and second order curvature $K$ at the end of 
this article. \\

%The curvature $K_1$ is rooted more in the 
%"metric geometry" of a graph not yet in "differential geometry". 
%The analog curvature of a boundary point of a graph is defined as $K_1(p) = 3 - |S_1(p)|$, where Gauss-Bonnet 
%$$  \sum_{p \in C} K_1(v) = 6 \chi(G) $$
%holds for any two-dimensional regions with boundary $C$. It is essentially equivalent to the Euler formula. 
%When seeing it as a  Gauss-Bonnet formula, it is probably the most simple 
%example, where a {\bf topological quantity}, the Euler characteristic is equal to the average
%of a  {\bf metric property}, the curvature $K_1$. \\

The main result in this paper is the formula
$\sum_{p \in \delta G} K(p) = 12 \chi(G)$ which holds for discrete domains $G$ and for a {\bf second order} curvature $K$.
To do so, we need to specify precisely what a "smooth domain" is. \\

%The hexagonal lattice is the only flat two-dimensional simply connected graph without boundary

The background lattice $X$ plays here the role of the two-dimensional plane. Its vertices can be realized as
the set of points $\{ k (1,0) + l (1,\sqrt{3})/2 \; | \; k,l \; {\rm integers} \; \}$. The edges are formed by the set of 
pairs for which the Euclidean distance is $1$.  In the infinite graph $X$, every point $p$ has 6 neighbors. Together with edges formed
by neighboring vertices, these points form the {\bf unit sphere} $S_1(p)$, a subgraph of $X$.
Similarly, any sphere $S_2(p)$ of radius $2$ in this discrete plane has length $|S_2|=12$.  
The {\bf curvature} $K = 2 |S_1| - |S_2|$ is zero at every point of the background lattice $X$.  \\

\begin{center}
\scalebox{0.90}{\includegraphics{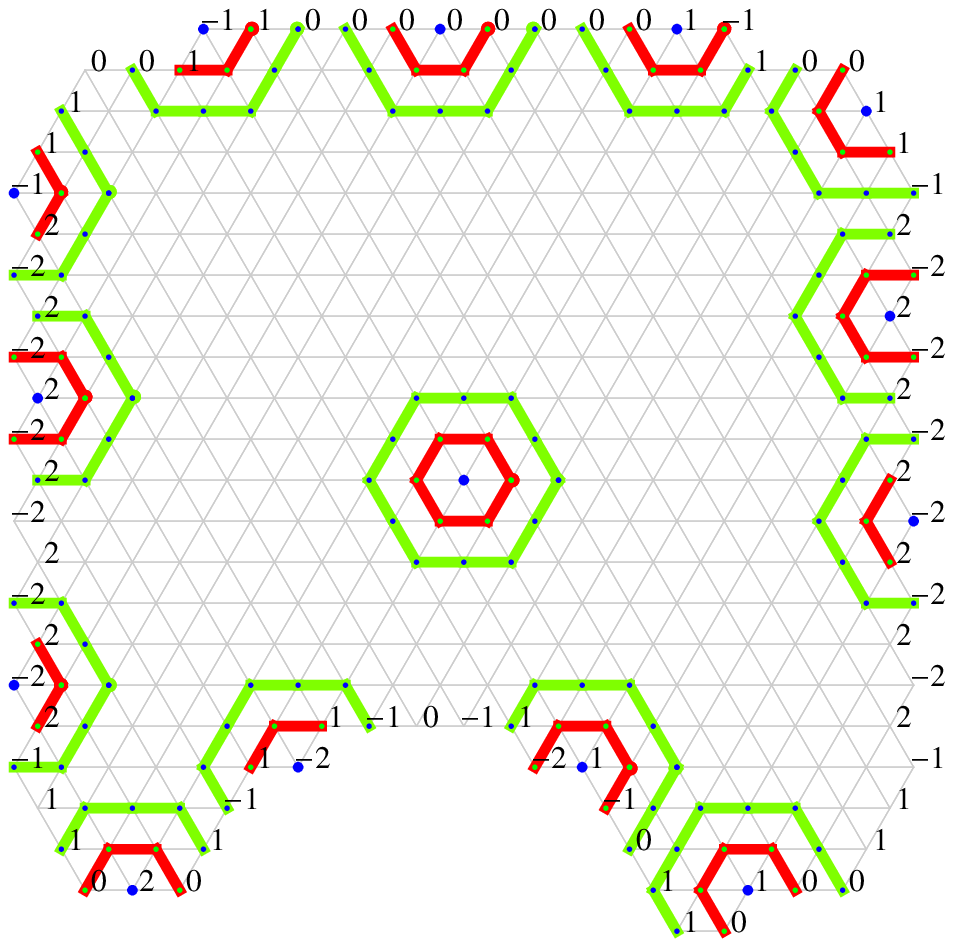}}\\
\end{center}
{\bf Figure:} Curvature computation. The numbers near each vertex indicate the curvature of the point.
At each of a few chosen points, we have drawn the spheres of radius $1$ and $2$ in $G$.
Adding up the curvatures over the boundary gives 12. If a point has as a neighborhood a 
disc of radius $2$, the curvature is zero.  

\section{Topology of the planar triangular lattice}

A finite subset $G$ of the triangular lattice $X$ defines a {\bf graph} $(V,E)$, where $V \subset X$ is the 
set of vertices in $G$ and where $E$ is a subset of edges $(p,q)$ in $X$, pairs in $X$ have distance $1$ within $X$. 
We start by defining a dimension for graphs. To our best knowledge, this notion seems not yet have
appeared, even so in the graph theory literature, several notions of dimension exist. 
The definition of dimension is inductive and rather general and does not require the graph to be a subset of $X$. 

\begin{defn}
A {\bf sphere} $S_r(p)$ in the subgraph $G$ of $X$ whose vertices consists the set of points in $G$ which have geodesic 
distance $r$ to $p$, normalized, so that adjacent points have distance $1$ within $G$. The edges of the 
sphere graph $S_r$ are all pairs $(p,q)$ with $p,q \in S_r(p)$ for which $(p,q)$ is in $G$.  
A {\bf disc} $B_r(p)$ in the graph $G$ is the set of points $q$ which have distance $d(q,p) \leq r$ in $G$.
\end{defn}

\begin{defn}
A {\bf vertex $p$ of a graph $G=(V,E)$} is called {\bf 0-dimensional}, if $p$ is not connected to any other vertex.
A {\bf subset $G$} of $X$ is called {\bf $0$-dimensional} if every point of $G$ is $0$-dimensional in $G$. 
Zero-dimensionality for a graph 
means that it has no edges. A point $p$ of $G$ is called {\bf 1-dimensional} if $S_1(p)$ is 
$0$-dimensional, where $S_1(p)$ is the unit sphere of $p$ within $G$. A finite subset $G$ of $X$
is called {\bf 1-dimensional} if any of the points in $G$ is $1$-dimensional. A point $p$ of $G$ is called $2$ 
dimensional, if $S_1(p)$ is a one-dimensional graph.
A subset $G$ of $X$ is called $2$-dimensional, if every vertex $p$ of $G$ is a $2$-dimensional point. 
\end{defn}

The dimension does not need to be defined. For example, a point which has a sphere which contains of one and zero dimensional 
components has no dimension. One could define inductively a {\bf fractional dimension} 
by adding $1$ to the average fractional dimensions of the points on the unit sphere. \\

As an illustration of the notion of dimension,
lets look at the platonic solids as graphs. The cube and the dodecahedron are one dimensional. The
isocahedron and octahedron are two dimensional. The tetrahedron is three dimensional, because the unit sphere
of each point is 2 dimensional. The cube and dodecahedron become two dimensional after kising (stellating) their faces. 
The tetrahedron becomes 2 dimensional after truncating corners. 

\begin{defn}
A point $p$ in $G$ called an {\bf interior point} of $G$ if the sphere $S_1(p)$ in the graph $G$ is the 
same than the sphere $S_1(p)$ in the background lattice $X$. In other words, for an 
interior point, the sphere $S_1(p)$ is a one-dimensional graph without boundary.
\end{defn}

%Remark:  two-dimensional graph, not necessarily part of $X$: 
%interior points $p$ have the property that $S_1(p)$ is a closed cycle. 

\begin{defn}
A point $p$ of a two-dimensional graph $G$ is a {\bf boundary point} of $G$, if 
it is not an interior point in $G$ but has a neighbor in $G$ which is an interior point. 
\end{defn}

For an interior point, the sphere $S_1(p)$ is a closed circle, 
for a boundary point, the sphere $S_1(p)$ is a union of one-dimensional arcs. 

\begin{defn}
The {\bf boundary} of $G$ is the set of boundary points of $G$.  
The {\bf interior} of $G$ is the set of interior points of $G$.
\end{defn}

{\bf Remarks:}  \\
a) The set of subsets $\{ A \subset {\rm int}(G) \; \} \cup \{ G \; \} \; $ defines a 
topology on $G$ such that the interior of $G$ is open and the boundary $\delta G$ is closed.   \\
b) The interior of a two dimensional graph $G$ is not necessarily a 2 dimensional graph. 
The disc of radius $1$ in $X$ for example is has a single interior point so that the interior is zero-dimensional. \\
c) Two dimensionality of a graph has no relation with "being planar". 
There are planar graphs like the tetrahedral graph which is three dimensional in our sense but which is planar.
And there are graphs like triangularizations of a torus, which are two dimensional but not planar.  \\
d) Topologically, one can show that the triangular graph $X$ is the only simply-connected two-dimensional
flat graph without boundary. \\

%Note that every point of the boundary is two dimensional in $G$, but without further conditions 
%the set of boundary points has one or two dimensional boundary points: 
%a point in the boundary can not be zero dimensional because it has to be adjacent to an interior point. 
%A triangle is an example, where every point is two dimensional, but it contains neither interior points
%nor boundary points: no point is adjacent to an interior point. A disc of radius 2, where one boundary edge
%is taken away is an example of a two dimensional set for which the boundary contains points which are two
%dimensional. 
%Note that with these definitions, both the rectangular lattice $Z^2$ as well as the 
%honey comb tiling of the plane are one-dimensional objects because in both cases, the sphere $S_1$ is disconnected 
%and so zero dimensional. 
%We discuss two dimensional graphs of constant curvature and two dimensional graphs of nonnegative curvature 
%elsewhere \cite{curvature60}. 

\begin{defn}
We call a subset $G$ of $X$ a {\bf domain} if the following 5 conditions are satisfied: 

\begin{center}
\parbox{12cm}{
\noindent
(i) $G$ is a two-dimensional subgraph of $X$. \\
(ii) Every point of $G$ is either an interior point or a boundary point. \\
(iii) The set of boundary points in $G$ is a one-dimensional graph.  \\
(iv) If two vertices $p,q$ in $G$ have distance $1$ in $X$, then $(p,q)$ is an edge in $G$. \\
(v) Two interior points in $G$ with a common boundary points have distance 1 or are both adjacent 
to a third interior point. 
}
\end{center}
\end{defn}

The conditions (i),(ii), (iii) are natural. Condition (iv) assures that no unnatural "fissures" can exist.
Condition (v) assures that the connectivity topology of the domain and the connectivity topology 
of the interior set are the same. \\

\begin{defn}
A domain is called a {\bf finite domain}, if it is a finite graph which is a domain.
A domain is called {\bf a smooth domain}, if it is a domain and its complement is a domain too.
\end{defn}

{\bf Remarks:} \\
a) We could additionally require the interior of a domain to be two-dimensional but we do not need that. 
Actually, the proof of the main theorem becomes simpler if we do not make this assumption. It would just 
lift a difficulty on a different level. For us it will be important to look at the dimension of points 
in the interior of $G$. \\
b) Some of these conditions for "domains" have analogues in the continuum, where they are necessary 
for the classical Gauss-Bonnet to be true:
we can not have "hairs" sticking out of the domain for example. 
The closure of the complement of a domain is a domain
too and we can not just leave out part of the boundary. Also in the continuum, it should not happen that 
parts of domains are tangent to each other. We also can not allow the boundary to be two-dimensional, 
like for the Mandelbrot set. \\
c) For a smooth domain, we can look at the interior $H' = {\rm int}(G')$ of the complement $G'$ of ${\rm int}(G)$. 
Then, the boundaries satisfy $\delta(G) = \delta(G')$. The three sets ${\rm int}(G')$,
${\rm int}(G)$ and $\delta(G)=\delta(G')$ partition the graph $X$. 

\begin{center}
\scalebox{0.55}{\includegraphics{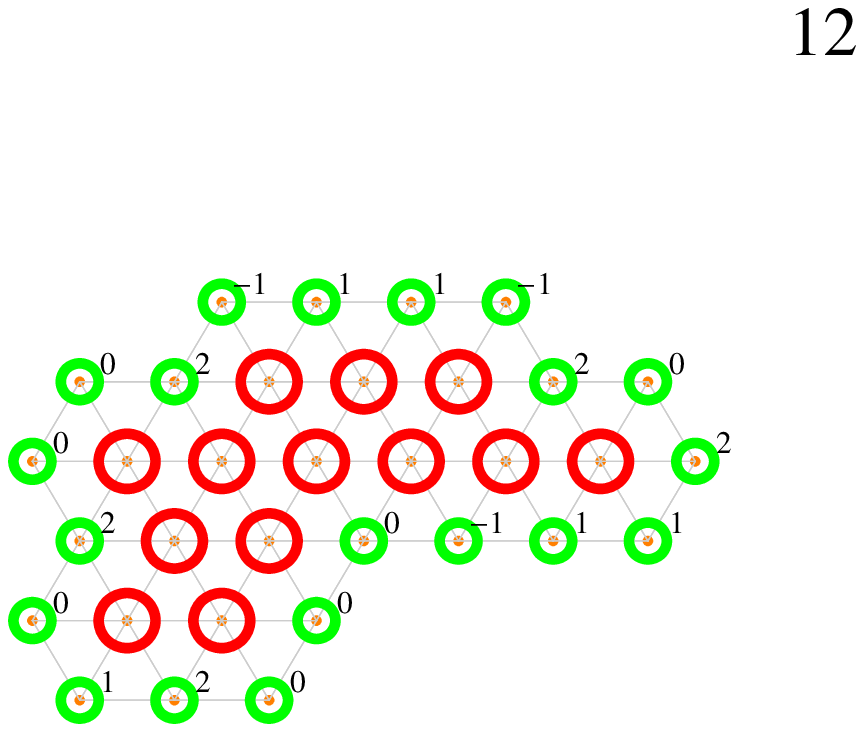}} 
\scalebox{0.55}{\includegraphics{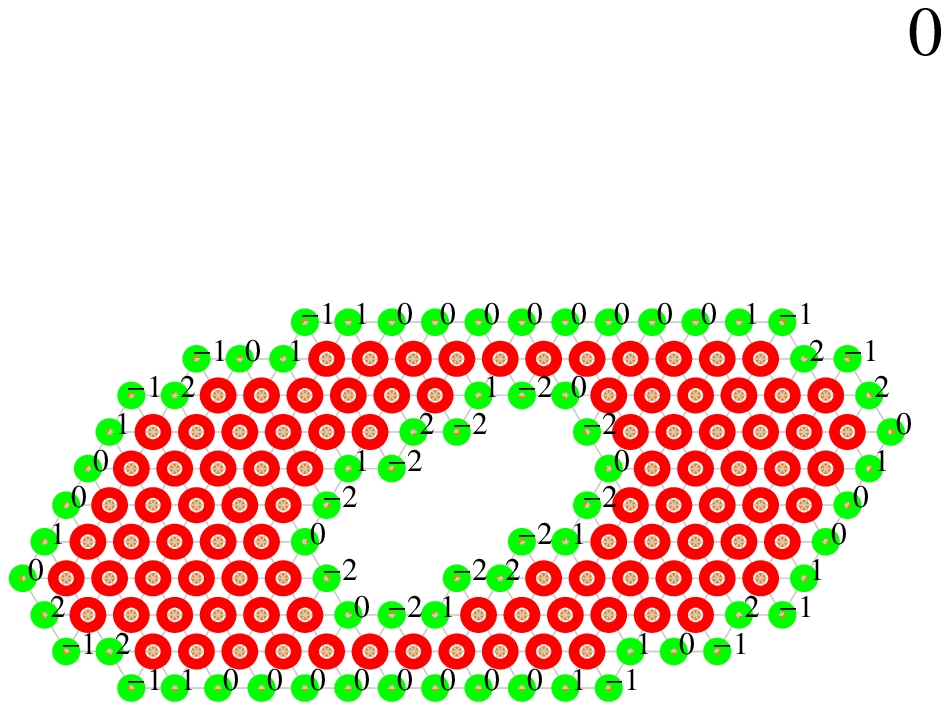}} 

\scalebox{0.55}{\includegraphics{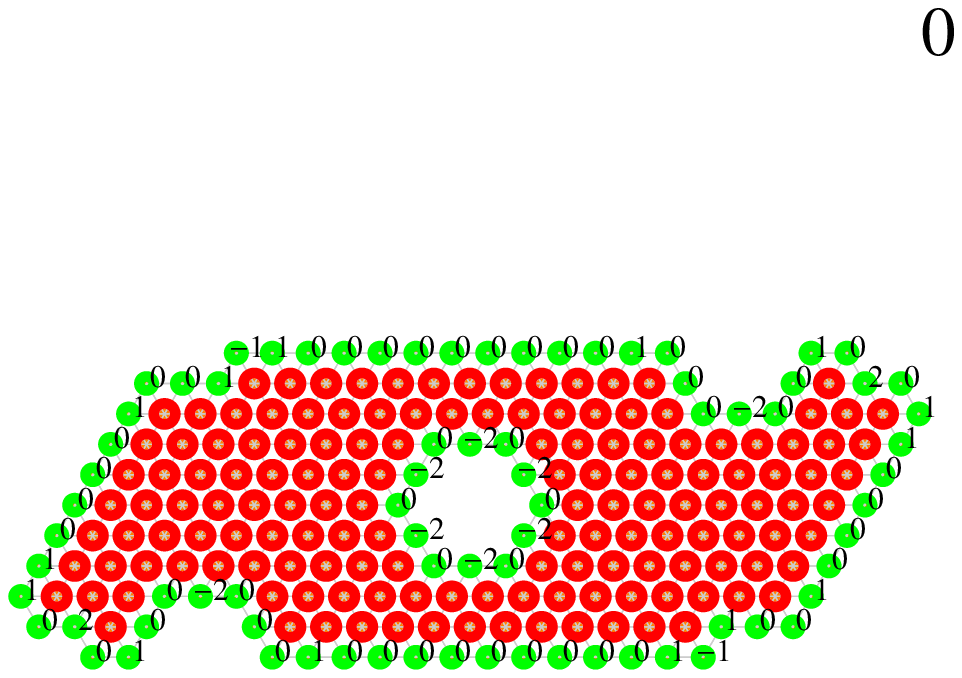}} 
\scalebox{0.55}{\includegraphics{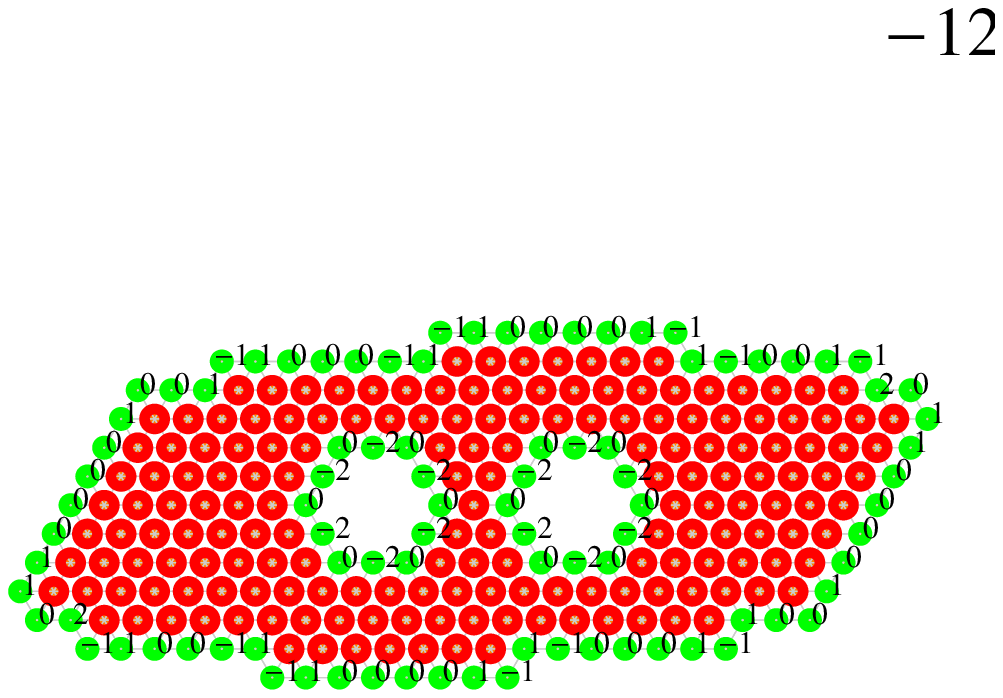}} 
\end{center}
{\bf Figure:} Examples of domains. The number in the upper right corner is the total boundary curvature
of the domain. 

\begin{center}
\scalebox{0.55}{\includegraphics{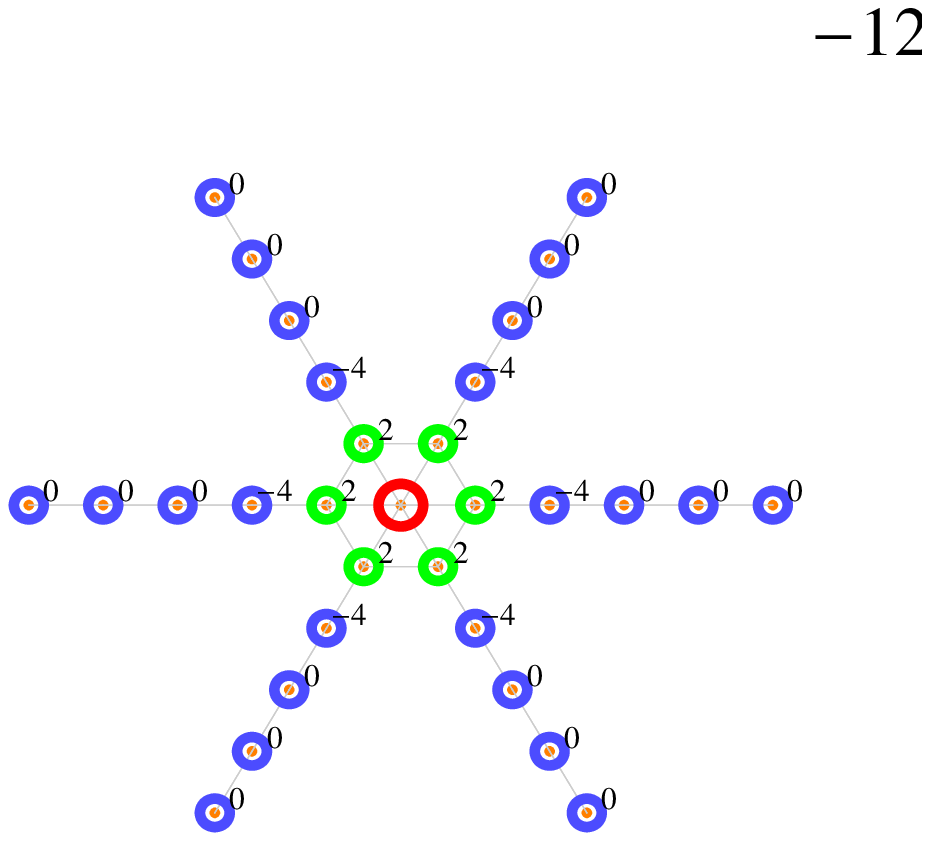}} 
\scalebox{0.55}{\includegraphics{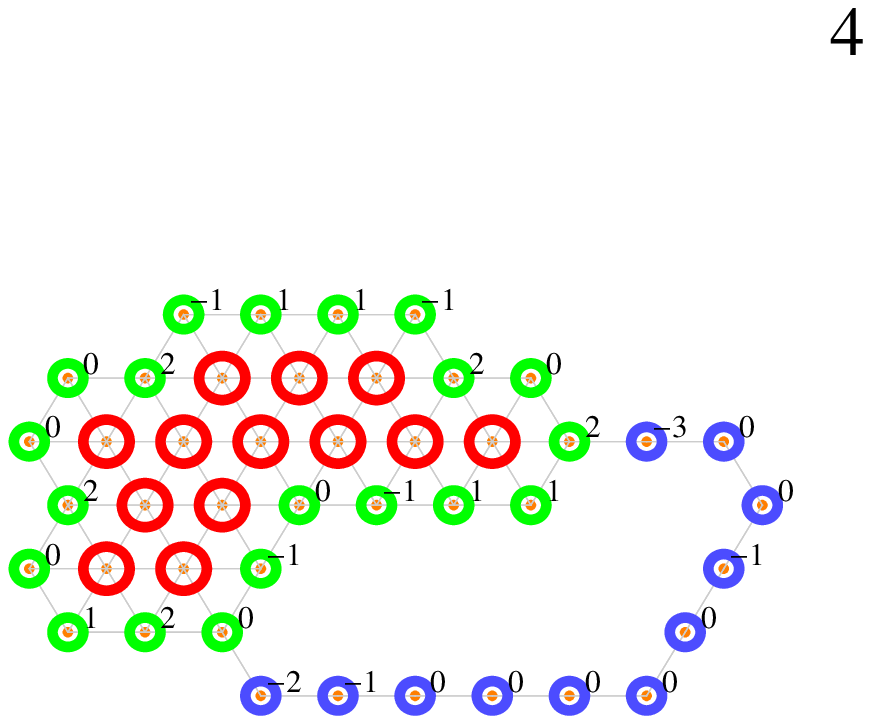}} 
\end{center}
{\bf Figure:} Examples of graphs which are not domains. To the left, a set with 2, 1 and 0 dimensional points.
It violates conditions (i). The second example is a set with both 2 and 1 dimensional points. 

\begin{center}
\scalebox{0.55}{\includegraphics{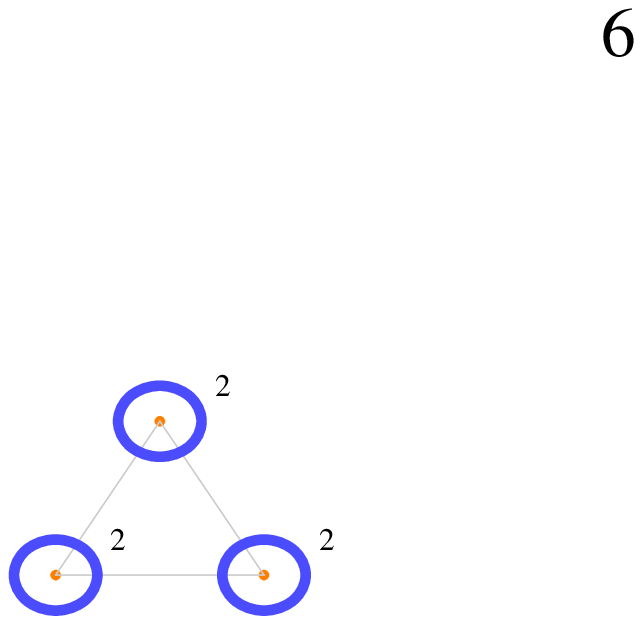}} 
\scalebox{0.55}{\includegraphics{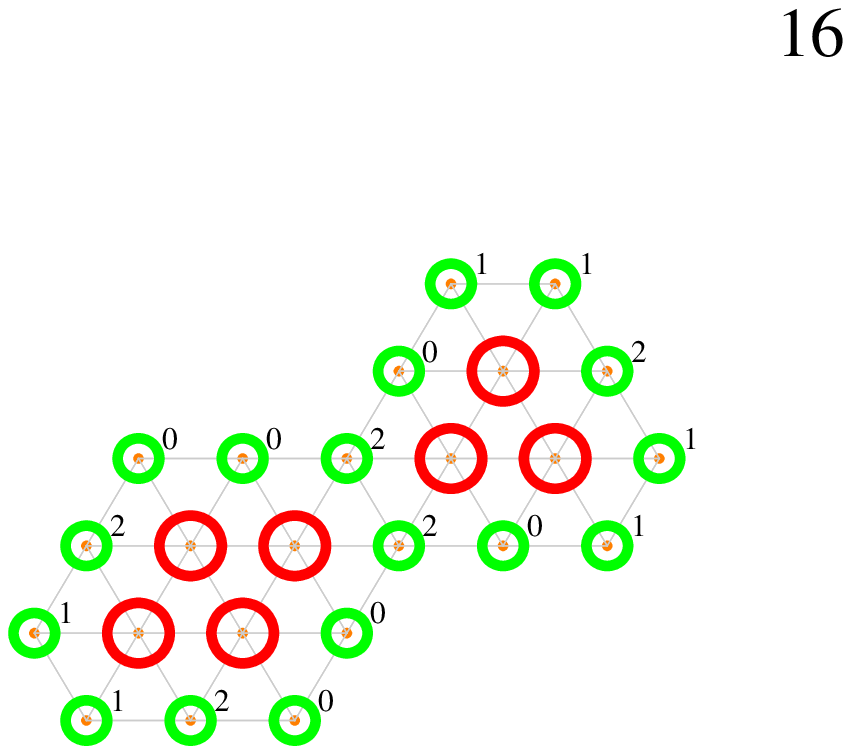}} 
\end{center}
{\bf Figure:} The left example is a two-dimensional set with no interior points and no boundary points. 
It violates condition (ii). The second one violates (v).

\begin{center}
\scalebox{0.55}{\includegraphics{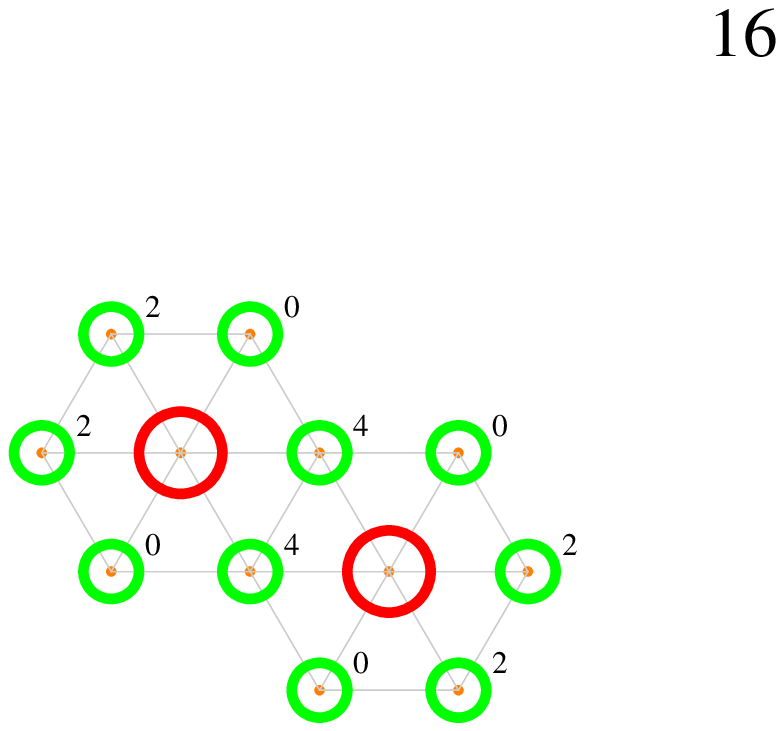}} 
\scalebox{0.55}{\includegraphics{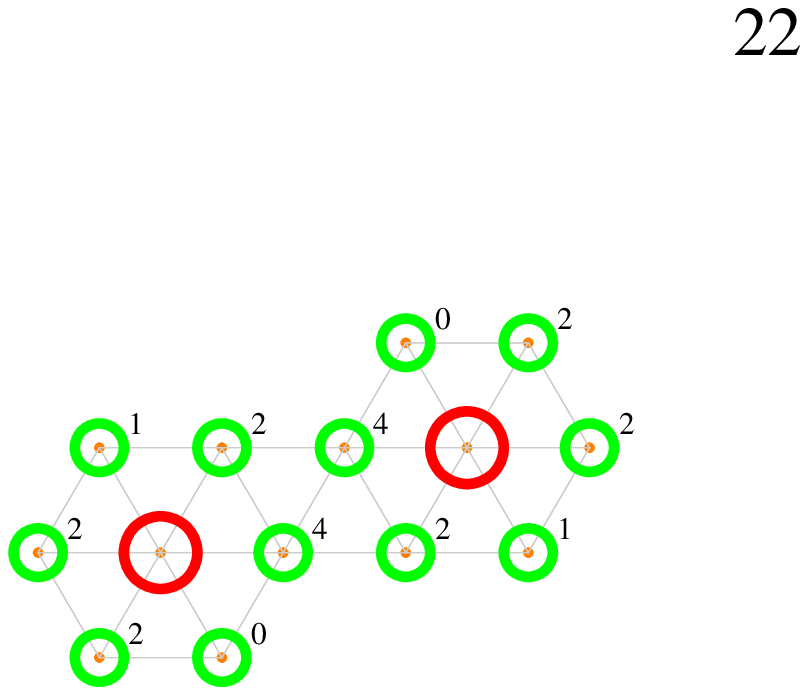}} 
\end{center}
{\bf Figure:} Examples of graphs which are not domains. 
The first violates (v), the third violates (iii).

\begin{center}
\scalebox{0.55}{\includegraphics{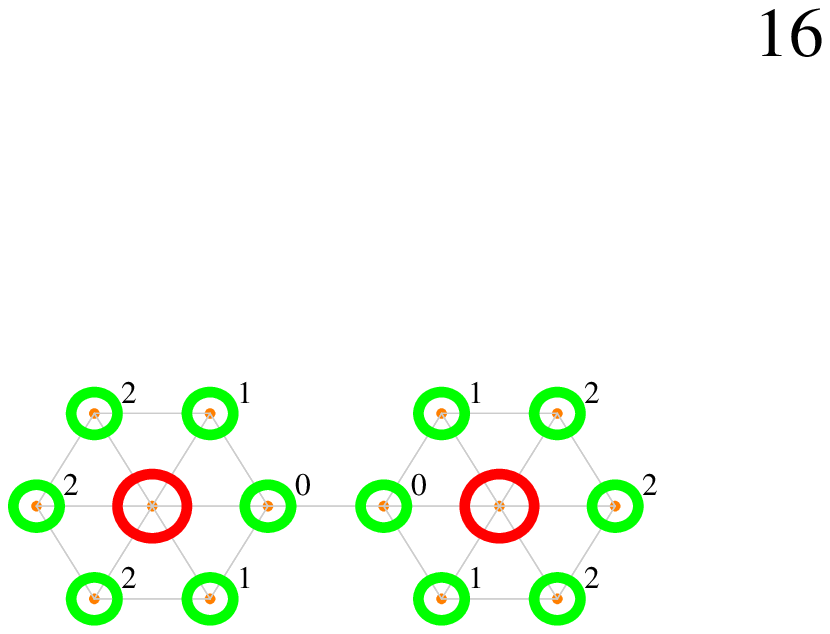}}
\scalebox{0.55}{\includegraphics{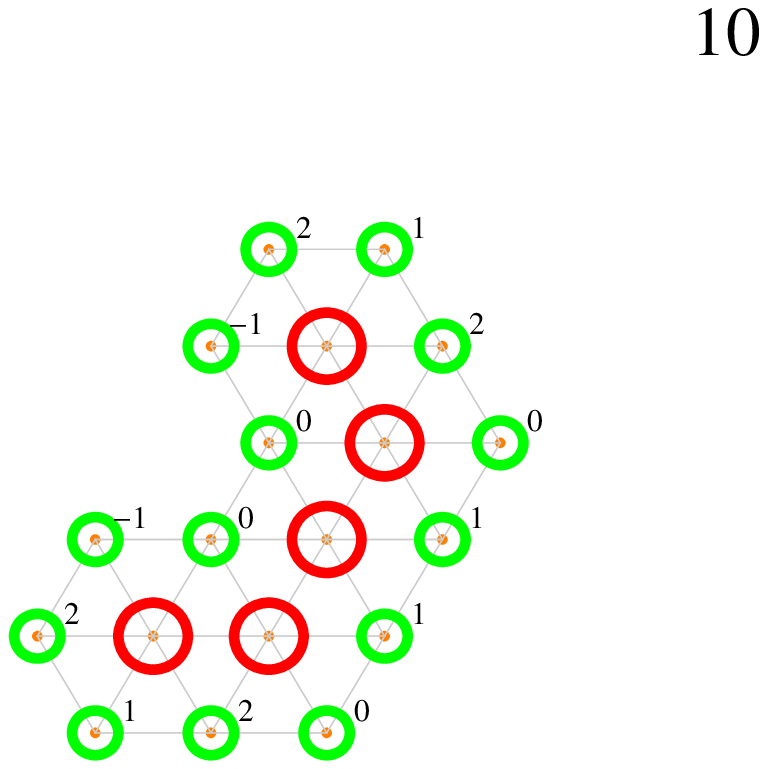}} 
\scalebox{0.55}{\includegraphics{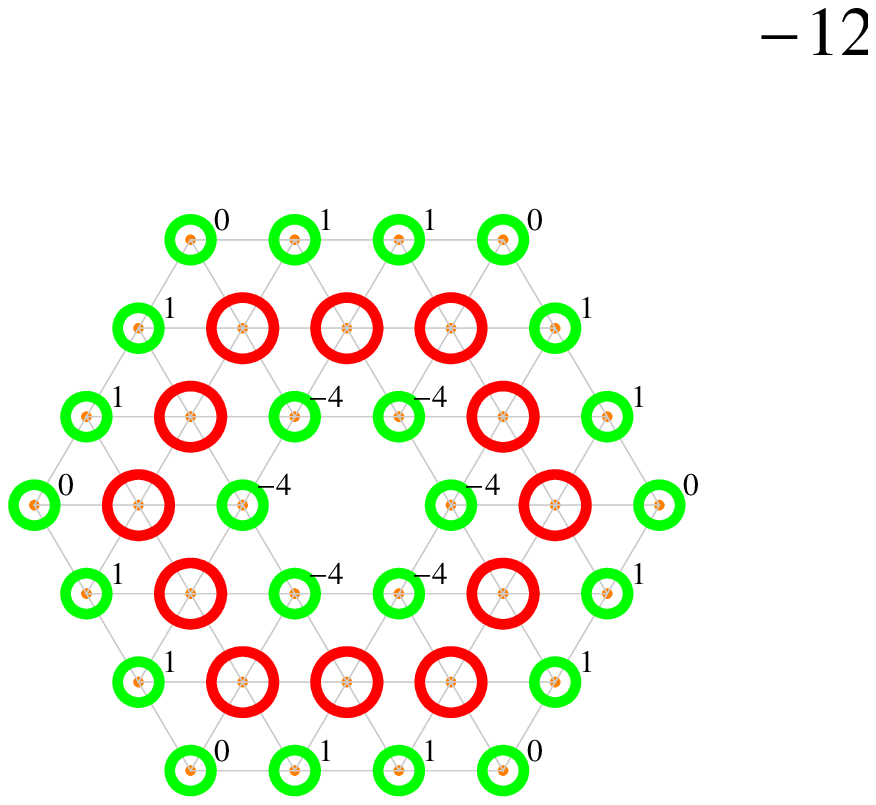}}
\end{center}
{\bf Figure:} Domains which are not smooth domains. The curvature of the first was computed
while assuming the nearest neighbor connection to be an edge as required by condition (iv). 
If the connection is not in place (violating (iv)), the total curvature would be 24. \\

%\begin{center}
%\scalebox{0.35}{\includegraphics{classic/s1.ps}}
%\scalebox{0.35}{\includegraphics{classic/s2.ps}}
%\scalebox{0.35}{\includegraphics{classic/s3.ps}}
%Figure: The requirements for graphs to be a region 
%are similar than the ones for classical regions in the plane. 
%\end{center}

The following lemma allows us to deal more efficiently with eligible regions and eliminates
many subsets which are not regions. It says that the set of interior points determines 
the region as well as its boundary. 

\begin{lemma}
Let $G$ be a domain and $H={\rm int}(G)$ be the set of interior points of $G$.
Then $G= \bigcup_{q \in H} B_1(q)$,
where $B_1(q)$ is the disc of radius $1$ in $X$. Especially, the interior set $H={\rm int}(G)$ 
determines the domain $G$ completely.
\end{lemma}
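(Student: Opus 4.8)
The plan is to prove the set identity by a double inclusion that reads off directly from the definitions of interior and boundary point, and then to recover the edge set from condition (iv), which gives the final assertion that $H$ determines $G$.

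For the inclusion $\bigcup_{q \in H} B_1(q) \subseteq G$, fix an interior point $q \in H$. By the definition of an interior point, the unit sphere $S_1(q)$ computed inside $G$ coincides with the unit sphere $S_1(q)$ computed in the background lattice $X$; in particular all six $X$-neighbours of $q$ belong to $G$. Since $q$ itself belongs to $G$, the disc $B_1(q)$, which is just $q$ together with those six neighbours, is contained in $G$. Taking the union over all $q \in H$ gives this inclusion.

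For the reverse inclusion $G \subseteq \bigcup_{q \in H} B_1(q)$, let $p$ be any vertex of $G$. By condition (ii) in the definition of a domain, $p$ is either an interior point or a boundary point, so the following case analysis is exhaustive. If $p$ is an interior point, then $p \in H$ and $p \in B_1(p)$. If $p$ is a boundary point, then by the definition of a boundary point $p$ has a neighbour $q$ in $G$ which is an interior point; since every edge of $G$ is an edge of $X$, the vertices $p$ and $q$ have distance $1$ in $X$, hence $p \in B_1(q)$ with $q \in H$. Either way $p$ lies in the union, and the two inclusions together give the asserted identity of vertex sets.

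To upgrade this to an identity of graphs, and hence to the statement that $H$ determines $G$ completely, I would invoke condition (iv): in any domain the edge set is precisely the set of pairs of vertices at distance $1$ in $X$, so it is a function of the vertex set alone. Consequently a domain is determined by its vertices, the vertices are determined by $H$ via the identity just proved, and the claim follows. The argument is essentially bookkeeping; the only points requiring care are to keep the spheres and discs taken in $G$ distinct from those taken in $X$, and to use condition (ii) so that the case split on $p$ really covers all of $G$ — a vertex that were neither interior nor boundary could otherwise escape the union, which is precisely why condition (ii) is part of the definition of a domain.
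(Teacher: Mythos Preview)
Your argument is correct and follows essentially the same double-inclusion route as the paper's own proof, with the paper being terser and not spelling out the invocation of condition~(ii). Your additional remark that condition~(iv) recovers the edge set from the vertex set is a welcome clarification that the paper leaves implicit in the phrase ``determines the domain $G$ completely.''
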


\begin{proof}
If a point $p$ is in $G$, then it is either an interior point or a point adjacent to an 
interior point. Therefore $G \subset \bigcup_{q \in H} B_1(q)$. On the other hand, if 
$p$ is in $\bigcup_{q \in H} B_1(q)$, then $p \in B_1(q)$ for some $q$. Because $q \in G$
and $S_1(q) \in G$ by definition of being an interior point, we have $p \in G$. 
\end{proof}

{\bf Remark:} For a simply connected region, also the boundary of $G$ determines the region, but we
do not need that.

\section{Curvature} 

\begin{defn}
Let $|S_r(p)|$ denotes the number of edges in the sphere $S_r(p)$.
We call it the {\bf arc length} of the sphere $S_r$. 
\end{defn}

{\bf Remarks.}  \\
a) Note that $|S_1|$ is not necessarily the number of vertices in $S_1$. 
Similarly, $|S_2|$ is the number of edges in $S_2$ which is not always equal to 
the number of vertices in $S_2$.  \\
b) The sphere $|S_1|$ does not necessarily have to be connected, nor does it have
to have a defined dimension. It could be a union of a segment and a point for example.

\begin{defn}
The {\bf curvature} of a boundary vertex $p$ in a region $G$ is defined as
$$  K(p) = 2 |S_1(p)| - |S_2(p)|  \; . $$
The {\bf curvature} of a finite domain $G$ is the sum of the curvatures over the boundary.
\end{defn}

{\bf Remarks.}  \\
a) This definition is motivated by differential geometry since one can derive an 
analogue formulas in the continuum 
$K = \lim_{r \to 0}  \frac{2 |S_{r}| - |S_{2r}|}{2 \pi r^3}$ for a point on the 
boundary curvature of a region. \\ %\cite{puiseux}. \\
b) Note that as defined, $S_2(p)$ refers to the 
geodesic circle of radius $2$ {\bf in} $G$ and not in $X$ so that every point $q \in G$ of distance 
$2$ in $X$ to $p$ belongs to $S_2(p)$ whether there is a connection within $G$ from $p$ to $q$ or not. 
The reason for this choice is that we do want the curvature definition to be nonlocal. 
This subtlety will not matter since for the definition of smooth curve, we anyhow disallow 
situations where points have a large distance within $G$ but small distance in $X$. 

\begin{center}
\scalebox{0.55}{\includegraphics{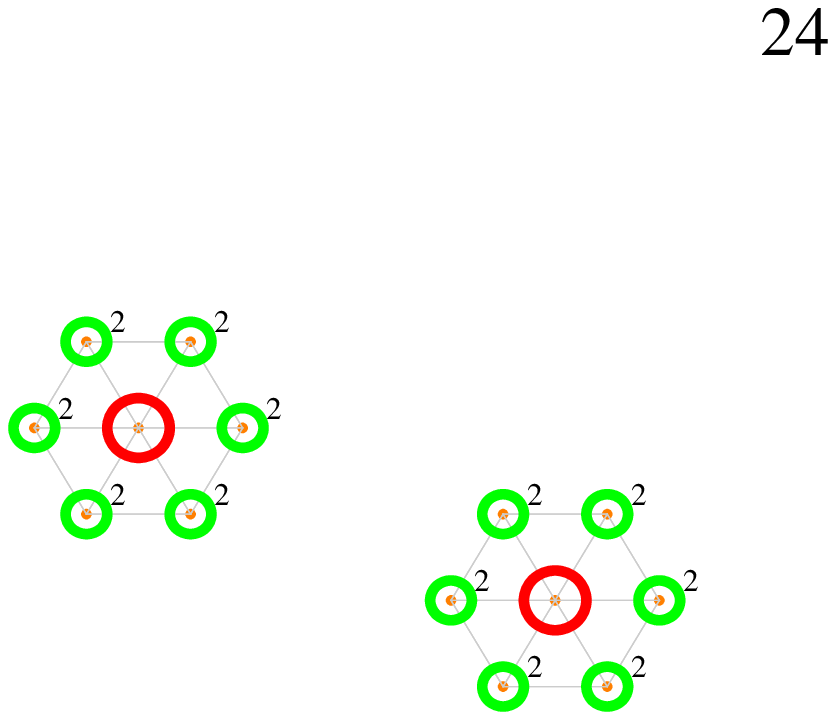}}
\scalebox{0.55}{\includegraphics{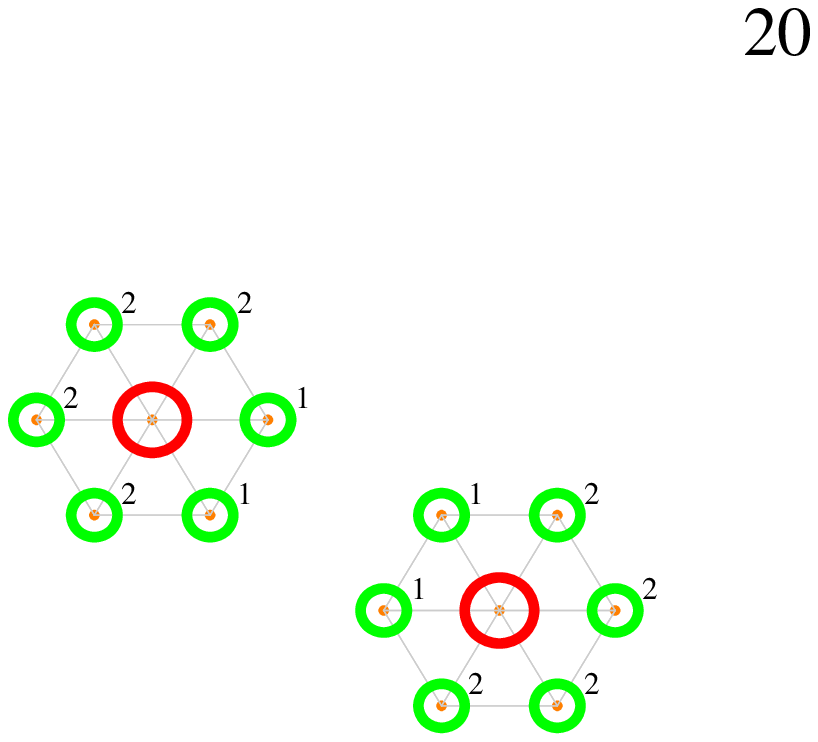}}
\end{center}
{\bf Figure:} The first picture is a smooth domain. It is not simply connected although
the two parts $S_1,S_2$ of the regions are at first separated enough to get a curvature 24. 
In the second case $S_2$ "feels" part of the other region $S_1$ and the curvature is not a multiple of $12$. 

\begin{center}
\scalebox{0.55}{\includegraphics{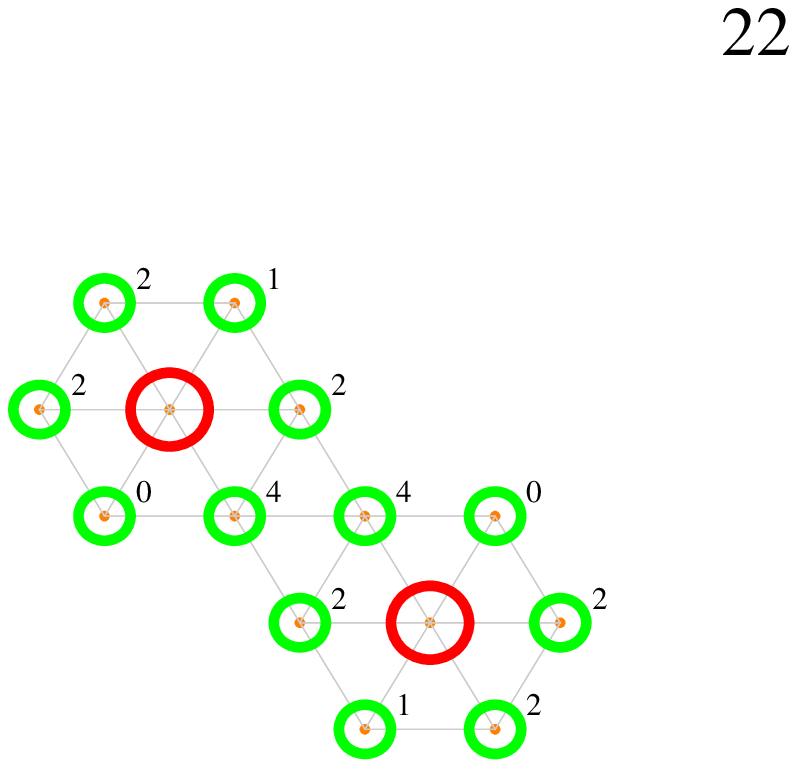}}
\scalebox{0.55}{\includegraphics{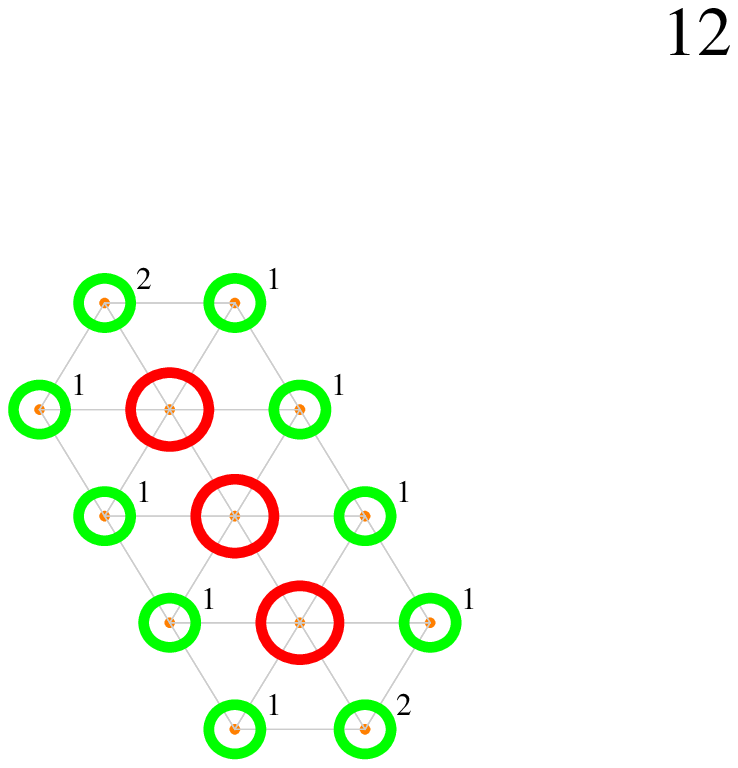}}
\end{center}
{\bf Figure:} In the first picture, the domain is still not smooth 
because the complement is not a domain. The last example is a smooth domain.
It has become simply connected. 

\begin{defn}
A {\bf curve} $\gamma$ in a smooth domain $G$ is a sequence of points $x_0, \dots, \; x_n$ 
in the interior of $G$ such that $d(x_i,x_{i+1})=1$ and consequently $(x_i,x_{i+1})$ is an edge of
$G$.  A curve is a {\bf closed curve} if $x_0=x_n$. In graph theory, a curve is called a chain. 
It is a {\bf nontrivial closed curve} if its length is larger than $1$. It is called a {\bf simple closed curve}, 
if all points $x_0,x_1, \dots ,x_{n-1}$ are different and $x_0=x_n$. 
\end{defn}

\begin{defn}
A domain is called {\bf simply connected} if every closed curve $\{ x_1, \dots, x_n  \; \}$ 
in the interior $H$ of $G$ can be deformed to trivial closed curve within $G$, where a {\bf deformation} of 
a curve within $G$ consists of a composition of finitely many elementary 
deformation steps
$\{ x_1, \dots, x_n \} \to \{ y_1, \dots, y_n \; \}$ with 
$\sum_i d(x_i,y_i)=1$ and such that $x_i,y_i$ are in $H$.
% CORRECTION
As in the continuum, simply connectedness means that 
any closed curve in the interior of $G$ can be deformed to a point within the interior of $G$. 
\end{defn}

%Remark: Alternatively, we could say that for any simply connected domain, the boundary 
%Because a curve in $X$ is also a polygon in the plane, there is an interior and exterior 
%by the classical Jordan's curve theorem. 

\section{The curvature 12 theorem}

Our main result of this paper is a discrete version of the "Umlaufsatz". It will be generalized
to more general domains below. 

\begin{thm}[Curvature 12 Umlaufsatz]
The total boundary curvature of a finite, smooth and simply connected domain $G$ is $12$. 
\end{thm}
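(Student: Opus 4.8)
The plan is to reduce the claim to a purely local curvature count via an Euler-characteristic argument, and then verify the boundary contributions are exactly what Euler's formula predicts. First I would set up notation: write $H = \mathrm{int}(G)$ for the interior, $\delta G$ for the boundary, and recall from the earlier lemma that $G = \bigcup_{q\in H} B_1(q)$, so $G$ is completely determined by $H$ and in particular every boundary point sits adjacent to an interior point. Since $K(p) = 0$ at every interior point (a point whose $S_1,S_2$ are the full circles of the background lattice $X$, of lengths $6$ and $12$, gives $2\cdot 6 - 12 = 0$), the total boundary curvature equals $\sum_{p\in G} K(p)$, so it suffices to show $\sum_{p\in G}\bigl(2|S_1(p)| - |S_2(p)|\bigr) = 12$.

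Next I would compute each of the two sums by a double-counting / incidence argument. For $\sum_p |S_1(p)|$: an edge of $S_1(p)$ is a pair $(q,q')$ at distance $1$ from each other, both at distance $1$ from $p$; equivalently $\{p,q,q'\}$ spans a triangle in $G$. So $\sum_{p\in G}|S_1(p)| = 3F$, where $F$ is the number of triangular faces of $G$ (each triangle is counted once from each of its three vertices). For $\sum_p |S_2(p)|$: an edge of $S_2(p)$ is a pair $(q,q')$, $d(q,q')=1$, with $d(p,q)=d(p,q')=2$ in $G$. The heart of the matter is to show that for a smooth simply connected domain this count is controlled locally by the geometry two steps out, and that the two sums combine with Euler's formula $V - E + F = \chi(G) = 1$ to give $12$. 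Concretely I expect that $\sum_p |S_2(p)|$ decomposes, via the smoothness and simple-connectivity hypotheses (which force the $2$-ball around every interior point to look like a flat patch of $X$ and prevent "wrap-around" contributions where a point feels a far piece of $G$ through a short $X$-distance — exactly the pathologies ruled out by conditions (iv),(v) and by the complement also being a domain), into a sum of standard contributions from flat interior configurations plus a boundary defect term; the flat contributions cancel against $2\cdot 3F$ by the identity $2|S_1| = |S_2|$ on the flat lattice, leaving only a boundary term equal to $12\chi(G) = 12$.

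The cleanest route for that last step, and the one I would actually carry out, is to argue by a surgery/induction on $|H|$. Base case: when $H$ is a single vertex, $G = B_1(q)$ is the disc of radius $1$ in $X$ (a hexagon with center, $7$ vertices), the boundary is the $6$-cycle, and one checks directly that $\sum_{p\in\delta G} K(p) = 12$. Inductive step: given a smooth simply connected domain $G$ with $|H|\ge 2$, find an interior vertex $q$ that can be removed so that $G' = \bigcup_{q'\in H\setminus\{q\}} B_1(q')$ is again a smooth simply connected domain (such a "simplicial collapse" point exists by simple-connectivity — the interior is, combinatorially, a disc, so it has a free vertex on its boundary). Then show that passing from $G'$ to $G$ changes $\sum_p K(p)$ by $0$: the curvature changes only at $q$, at the newly exposed/covered boundary vertices, and at their neighbors, and a finite case analysis (using (iv),(v) to enumerate the admissible local pictures of how one hexagon is glued on) shows the net change of $2|S_1|-|S_2|$ summed over the affected vertices is zero. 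The main obstacle is precisely this local bookkeeping: controlling $|S_2(p)|$ is delicate because $S_2$ is defined by geodesic distance \emph{in $G$}, so one must carefully use the smoothness hypotheses to rule out the degenerate gluings (the "broadcast" examples in the figures) where a nominal radius-$2$ sphere is distorted; once the admissible local moves are pinned down, verifying invariance of the total curvature under each is routine.
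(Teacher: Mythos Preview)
Your surgery/induction on $|H|$ is precisely the paper's approach: reduce to the single-interior-point disc (total curvature $12$) by repeatedly deleting an interior vertex, verifying via a finite local case analysis that each deletion leaves the total curvature unchanged and the domain smooth and simply connected.

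The one place the paper works harder than your sketch is in guaranteeing that a removable interior vertex always exists. Your justification --- ``the interior is, combinatorially, a disc, so it has a free vertex on its boundary'' --- is not quite right: the interior $H$ need not be two-dimensional; it can contain one-dimensional strands, and there are simply connected domains where no two-dimensional interior point can be removed without destroying smoothness (one-dimensional branches must first be trimmed) and others where no one-dimensional interior point can be removed (because it lies on a \emph{bridge} between two 2D pieces rather than on a free branch). The paper handles this by classifying interior points as one- or two-dimensional within $H$, calling the 2D components \emph{ridges} and the 1D components \emph{branches} or \emph{bridges}, and alternating two moves: \emph{pruning} (removing endpoints of branches) and \emph{etching} (removing a 2D interior point at the boundary of an end ridge). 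Simple connectedness enters by forcing the ridge/bridge incidence graph to be a tree, hence to have a leaf (an end ridge) where etching can proceed. So your collapse intuition is correct, but the structure that makes it go through is this tree, not a disc model of $H$.

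The double-counting route of your second paragraph (computing $\sum_p|S_1(p)|=3F$ and then attempting to control $\sum_p|S_2(p)|$ via Euler's formula) is not pursued in the paper at all; the paper never evaluates $\sum|S_2|$ globally and relies entirely on local invariance under the removal moves.
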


\begin{proof}
For the proof, it suffices to look at local deformations. We start with an arbitrary simply connected 
smooth region $G$ and find a procedure to remove interior points near the boundary while keeping the simply connectedness
property and keeping also the curvature the same. Removing one point only affects the 
curvatures in a disc of radius $2$ so that only finitely many cases need to be studied:

\begin{lemma}[Curvature is local]
Let $G_1,G_2$ be two regions and $p$ be a point in both $G_1$ and $G_2$. 
Let $U_1 = B_2(p) \subset G_1$ and $U_2 =  B_2(p) \subset G_2$ be the discs
of radius $2$ in $G_1$ and $G_2$ respectively. Define $H_i = G_i \setminus \{p\}$. 
If $U_1=U_2$, then 
$$ \sum_{p \in H_1} K(p) - \sum_{p \in H_2} K(p) = \sum_{p \in G_1} K(p) - \sum_{p \in G_2} K(p)  \; . $$
\end{lemma}

In other words, if we remove a point from a region, then the total curvature-change can be read off from 
the curvature-changes in a disc of radius $2$.  \\

We could check all possible configurations in discs of radius $2$ and compare the total curvature 
before and after the center point is removed. We indeed checked with the 
help of a computer that in all cases, where the total boundary curvature changes, the number of local 
connectivity components of the interior has changed or the complement has become non-smooth near the 
removed point. These experiments  helped us also to get the conditions what a domain is.  \\

But checking all possible local deformations is not a proof. We also need to know that there is always a point 
which we can remove without changing the topology of $G$ or its complement.  \\

It turns out that this question is of more global nature. 
Take a ring shaped region for example which has a one dimensional interior. No point can be removed
without the curvature to change. The key is to look at the dimension of points in the interior of $G$ and distinguish points which 
are one-dimensional in ${\rm int}(G)$ and points which are two-dimensional in ${\rm int}(G)$.
A zero dimensional interior means for a simply connected region 
that the graph is the disc of radius $1$ in $X$. By removing interior points, we want to reach this situation. 

\begin{center}
\scalebox{0.85}{\includegraphics{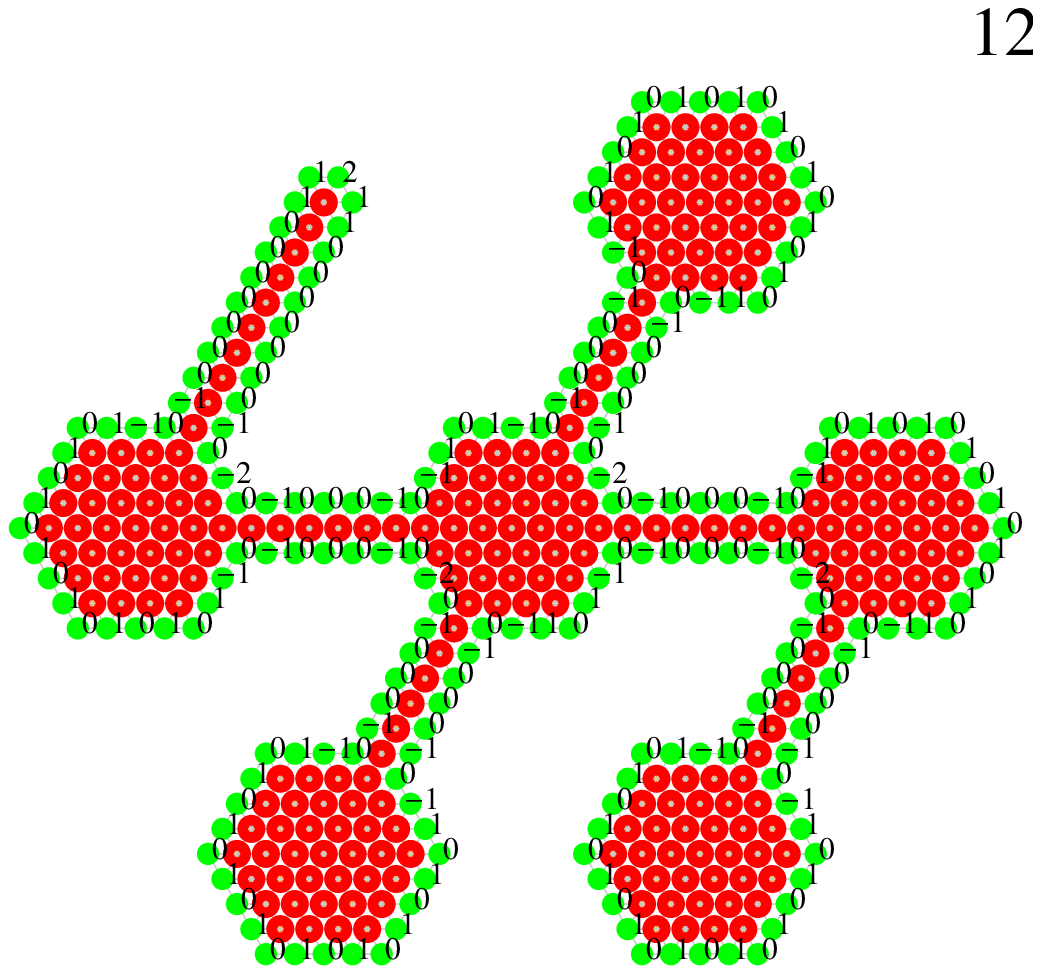}}
\end{center}
{\bf Figure:} Pruning a tree, a simply connected domain. To reduce a region, we have to trim the tree,
removing alternatively two-dimensional interior points and one-dimensional interior points until only 
one interior point is left.  \\

We are allowed to look at the {\bf topology of interior points} because ${\rm int}(G)$ defines $G$ by the above
lemma, it is enough to check what happens if we remove interior points. Our goal is to show: \\ 

\begin{propo}[Trimming a tree] 
For any simply connected smooth region $G$ for which the interior set $H$ has more than one point, 
it is possible to remove an interior point $p$ from $H$, 
such that the new region defined by $H \setminus \{p\}$ remains a simply connected smooth 
region with one interior point less and such that the curvature does not change. 
\end{propo}

The theorem follows from this proposition. Lets introduce some terminology: 

\begin{defn}
Given a smooth, simply connected region $G$ with interior $H$. Denote by $H_1$ the points in $H$ which are one dimensional
in $H$. Similarly, call $H_2$ the set of points in $H$ which are two dimensional in $H$. Connected components of $H_1$
are called either {\bf branches} or {\bf bridges}.  Connected components of $H_2$ are called {\bf ridges}. 
A {\bf branch} of $G$ is a connected component of $H_1$ for which at least one point has only one interior neighbor.
All other connected components of $H_1$ are called bridges. 
\end{defn}

\begin{center}
%\scalebox{0.75}{\includegraphics{}} 
\scalebox{0.85}{\includegraphics{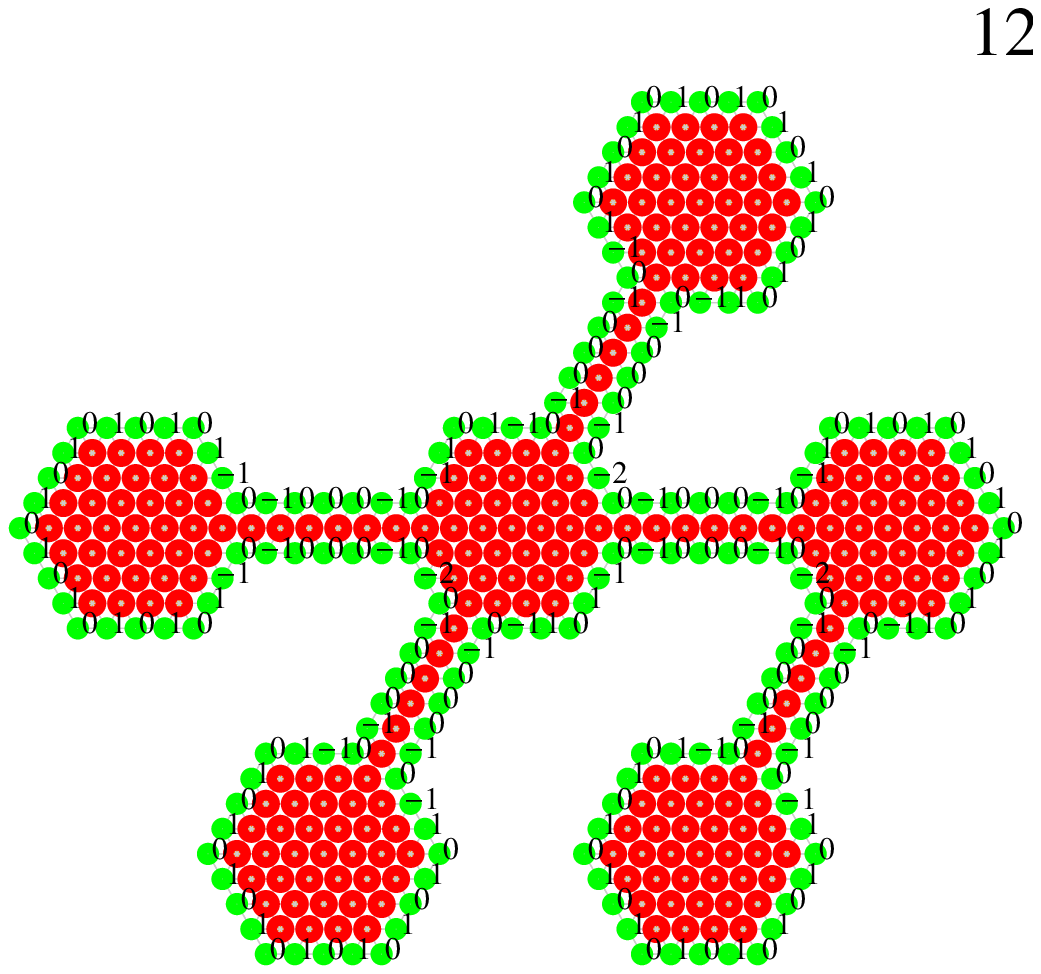}}
\end{center}
{\bf Figure:} A simply connected region with ridges, bridges. All branches have been 
pruned. Now, we have to start etching the ridges. We have the choice of 4 end ridges
here. The simply connectivity assures that there is an end ridge. 

\begin{center}
\scalebox{0.55}{\includegraphics{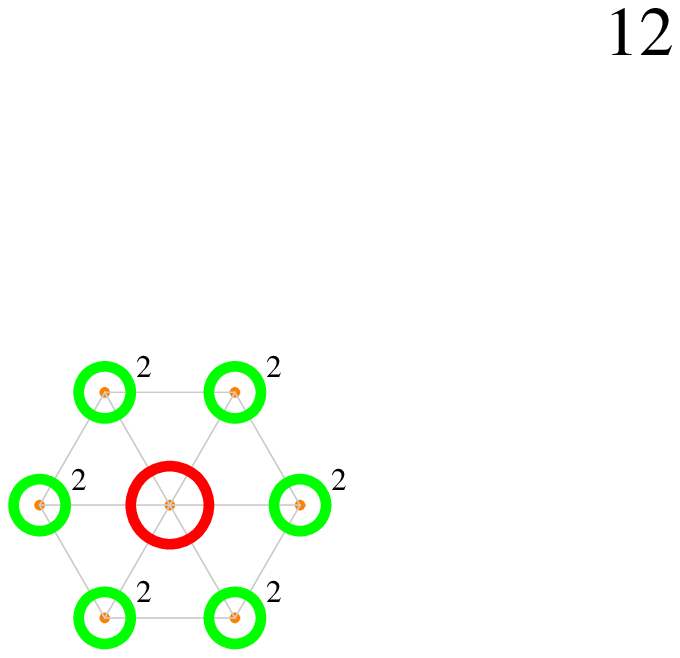}}
\end{center}
{\bf Figure:} The only situation, where we can 
not trim any more one dimensional branches nor two-dimension ridges.  \\

The set ${\rm int}(G)$ is the union of points which are two-dimensional in ${\rm int}(G)$  and points which are 
one-dimensional in ${\rm int}(G)$. We will use two procedures called {\bf pruning} and 
{\bf etching} to make the region smaller. The pruning procedure removes a one-dimensional interior point at branches.
The etching procedure removes a two-dimensional interior point at ridges. \\

\begin{center}
\scalebox{0.85}{\includegraphics{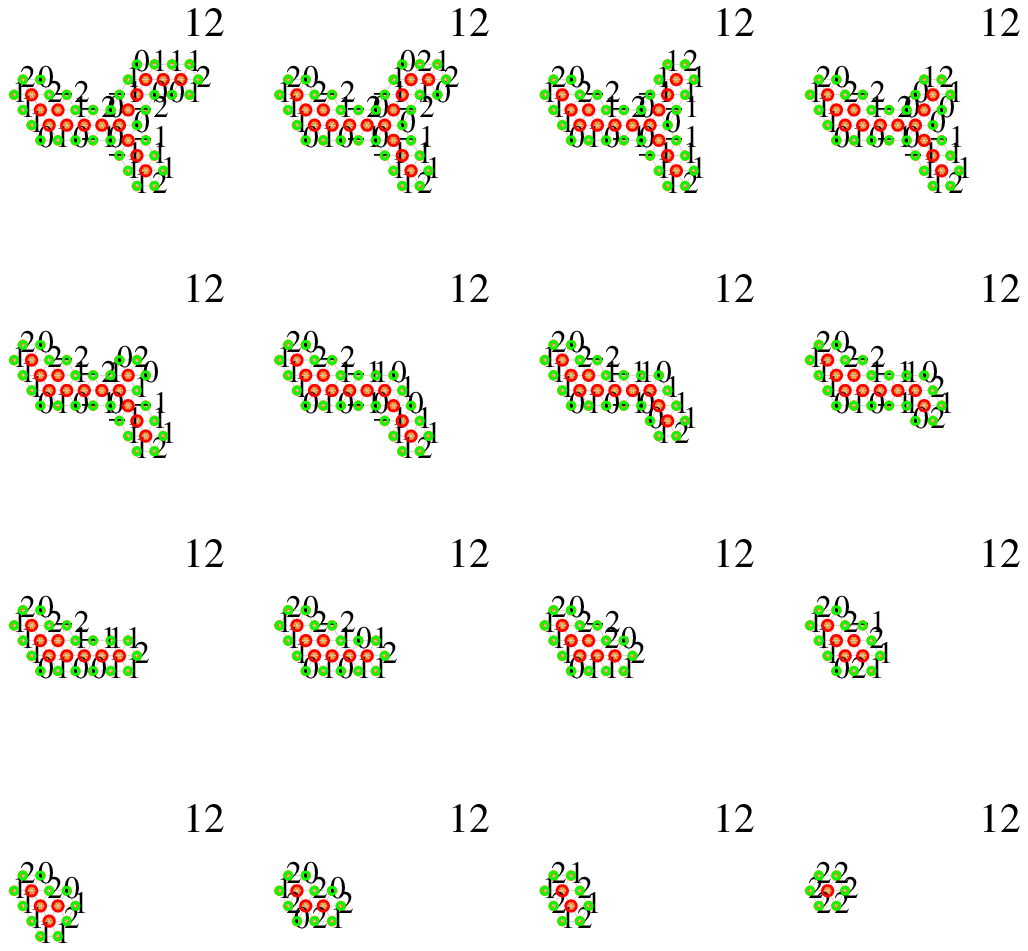}}
\end{center}
{\bf Figure:}  Pruning reduces the lengths of branches. Since curvature is local, we only need to 
check for a few end situations that the total curvature does not change.  \\

Lets start with the {\bf pruning procedure} which removing interior points which are one-dimensional in ${\rm int}(G)$.
It allows us to remove one-dimensional branches until we can no more reduce one-dimensional points in ${\rm int}(G)$.
Removing one-dimensional parts will make sure that there will be a two-dimensional ridges ready for 
the etching procedure. Here are the situations which can occur locally at a point of a branch. 

\begin{center}
\scalebox{0.55}{\includegraphics{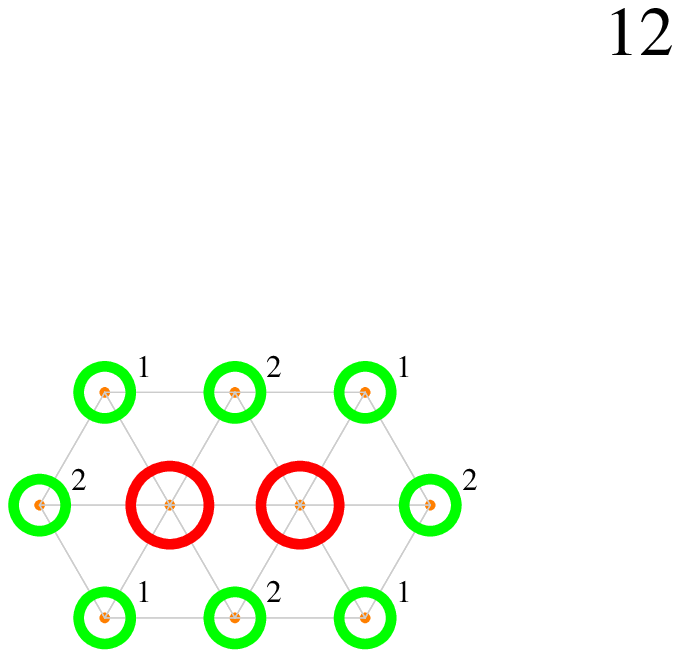}}
\end{center} 
{\bf Figure:} A one-dimensional point which has 1 interior neighbor. After removing a boundary point,
we end up with region 0.

\begin{center}
\scalebox{0.55}{\includegraphics{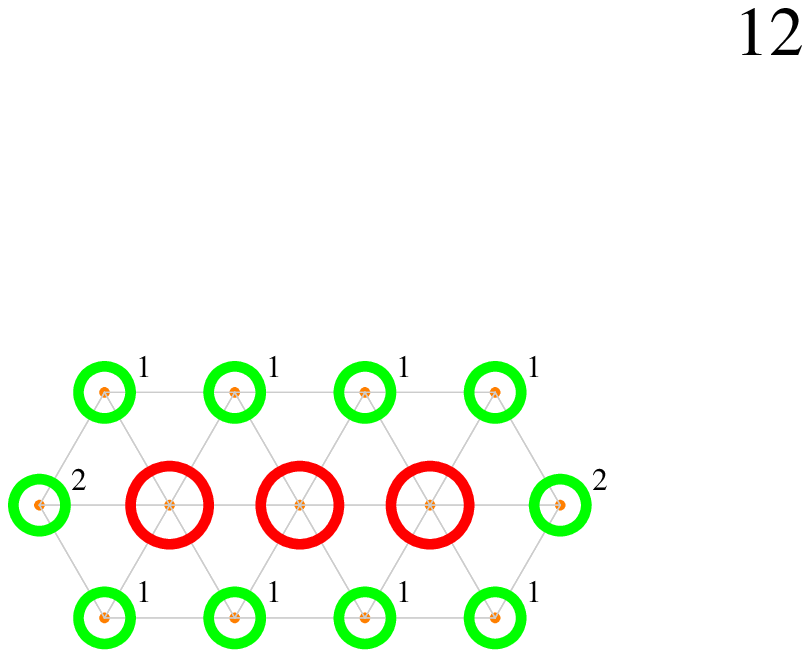}}
\scalebox{0.55}{\includegraphics{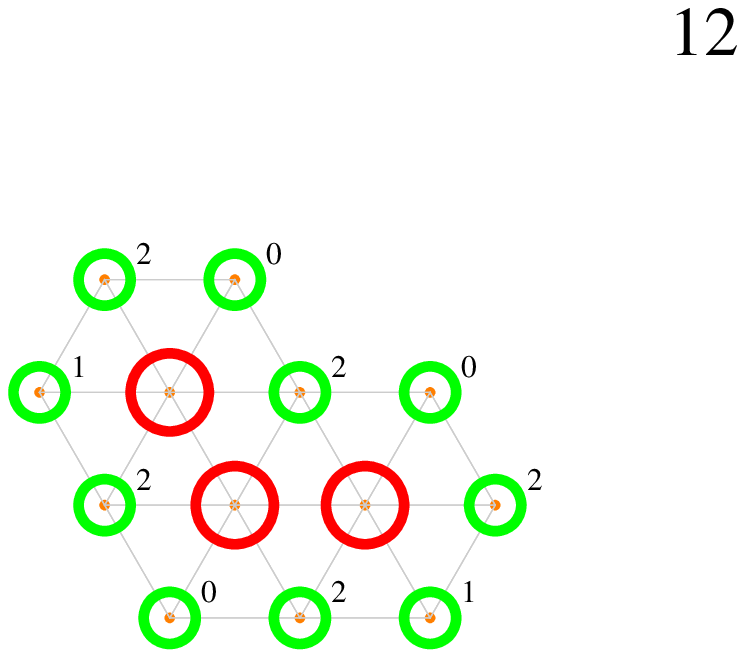}} 
\end{center}
{\bf Figure:} Reducing a one dimensional point at the boundary.
For any of the two situations, we end up with a region with one interior neighbor.

\begin{center}
\scalebox{0.35}{\includegraphics{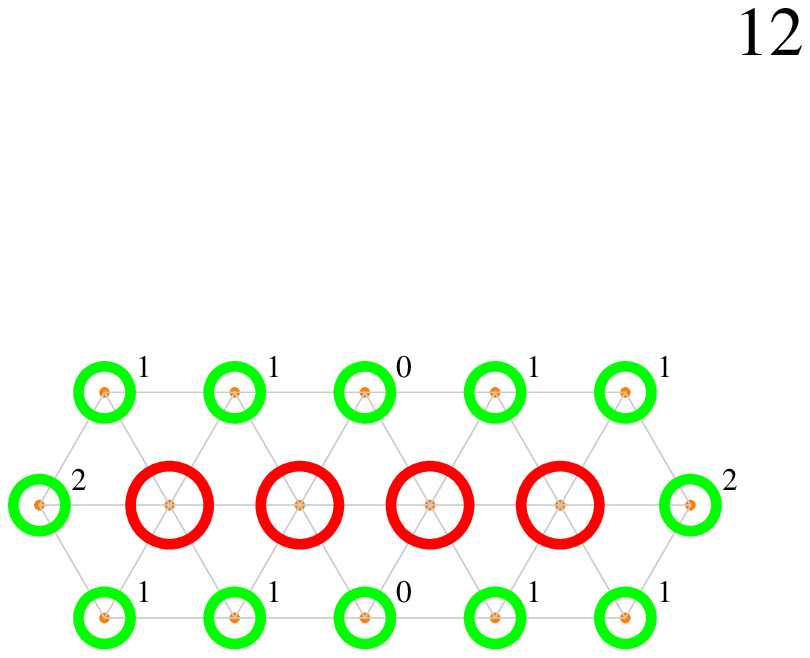}}
\scalebox{0.35}{\includegraphics{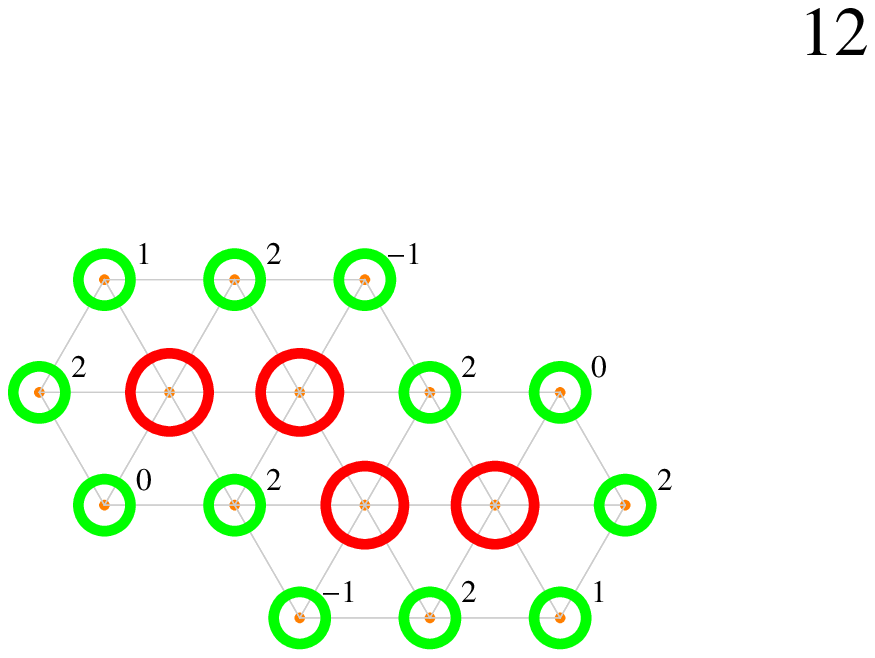}} 
\scalebox{0.35}{\includegraphics{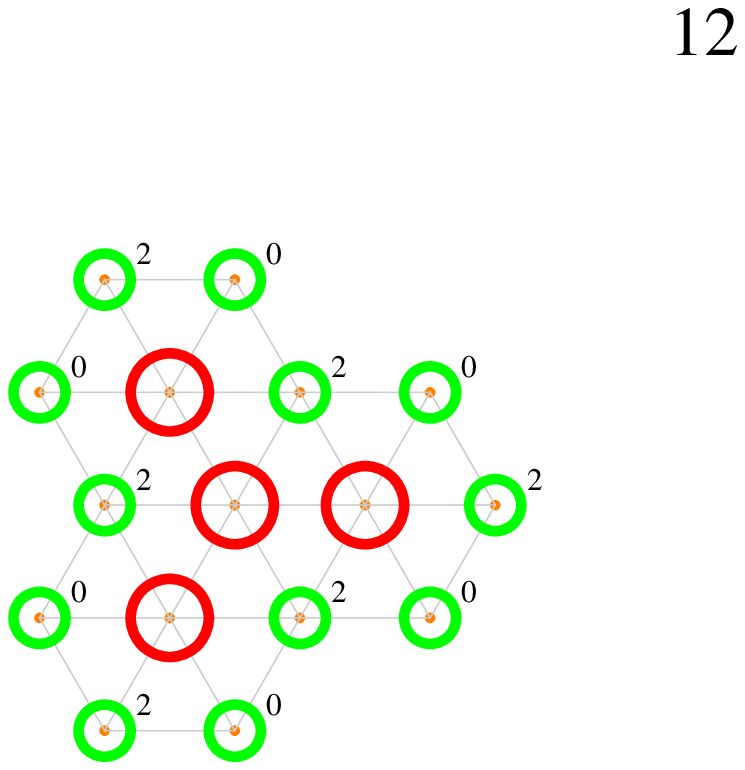}} 
\end{center}
{\bf Figure:} For any of the
first 3 situations, we end up with two neighboring interior points.  \\

After reducing one dimensional branches, the tree still can have one dimensional parts: these are
2D ridges connected with one-dimensional bridges which can not be pruned without changing the topology.  \\

\begin{center}
\scalebox{0.85}{\includegraphics{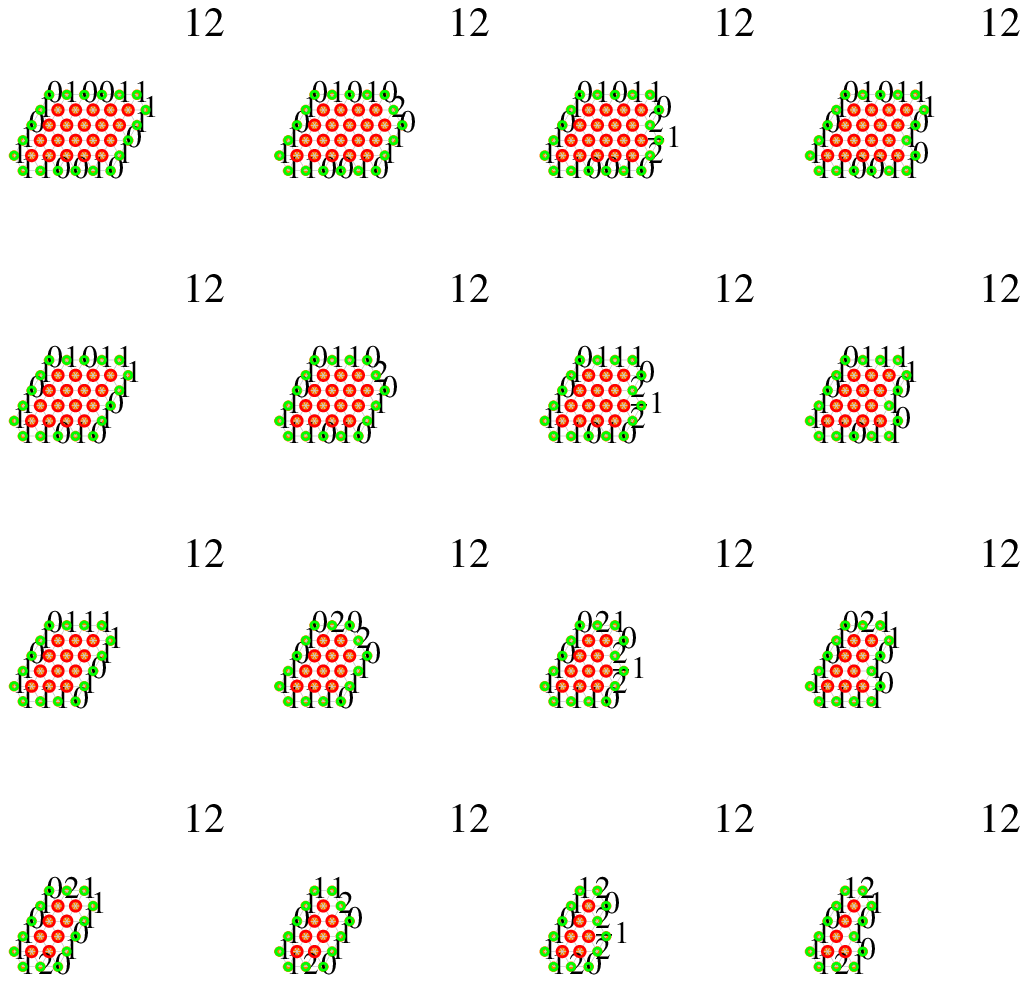}}
\end{center}
{\bf Figure:}  Etching thins out ridges. The etching is done at ridges which are end ridges, where only 
one bridge is attached.  With too many bridges attached, the etching process might not work. \\

The {\bf etching procedure} 
is invoked if no one-dimensional branches are left. The region consists now of 
two-dimensional ridges connected with bridges. 
Our goal is to see that we can remove a two-dimensional interior point of a ridge.  \\

The simply connectivity 
implies that there is a ridge which has only one bridge connected to it. To see this, look at a new
graph, which contains the two-dimensional ridges as vertices and one-dimensional bridges as edges.
This graph has no closed loops and is connected and must be a {\bf tree} with at least one end points. We can consequently 
focus our discussion to such an end-ridge for which only one $1$-dimensional bridge is attached. 
We are able to remove a boundary point on the opposite side of that region, where no branches can be and
where the boundary is "smooth". \\

\begin{center}
\scalebox{0.55}{\includegraphics{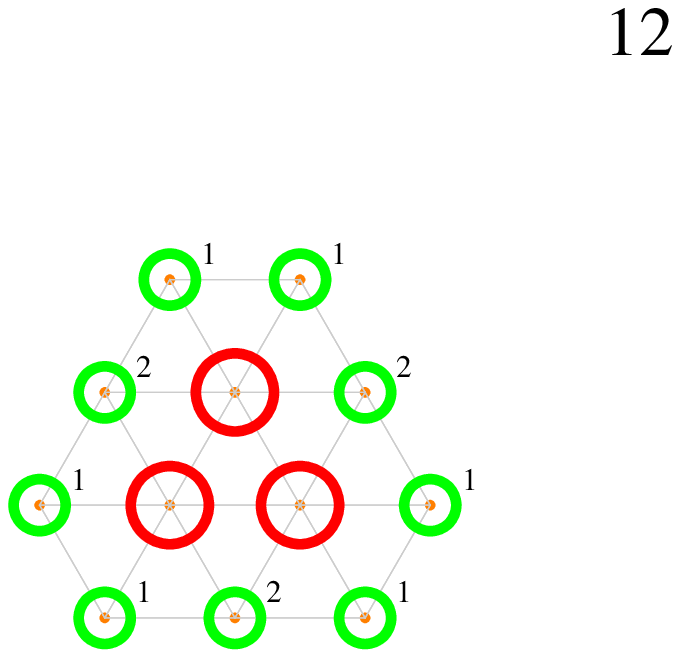}} 
\end{center}
{\bf Figure:} Reducing a two-dimensional interior point at the boundary 
which has 2 interior points as neighbors. \\

\begin{center}
\scalebox{0.55}{\includegraphics{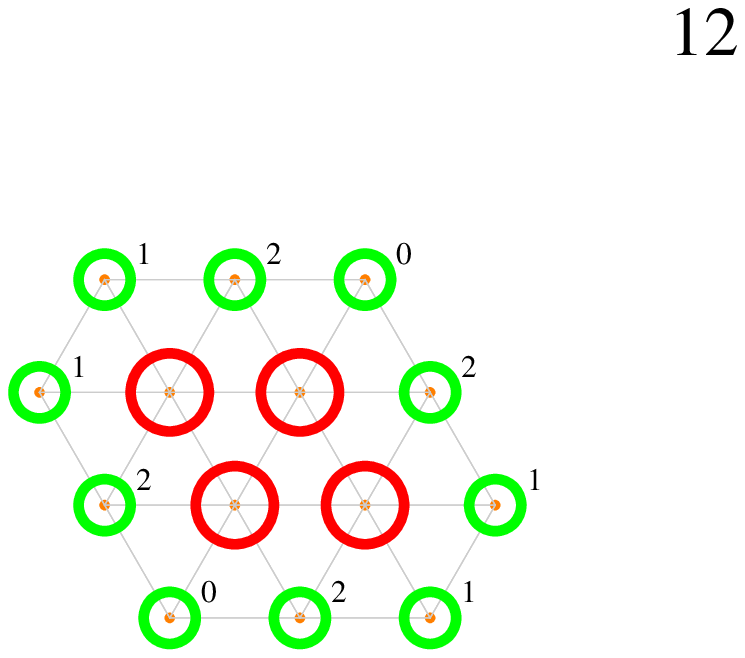}}
\end{center}
{\bf Figure:} Reducing a two-dimensional point at the boundary 
which has 3 interior points as neighbors. \\

\begin{center}
\scalebox{0.55}{\includegraphics{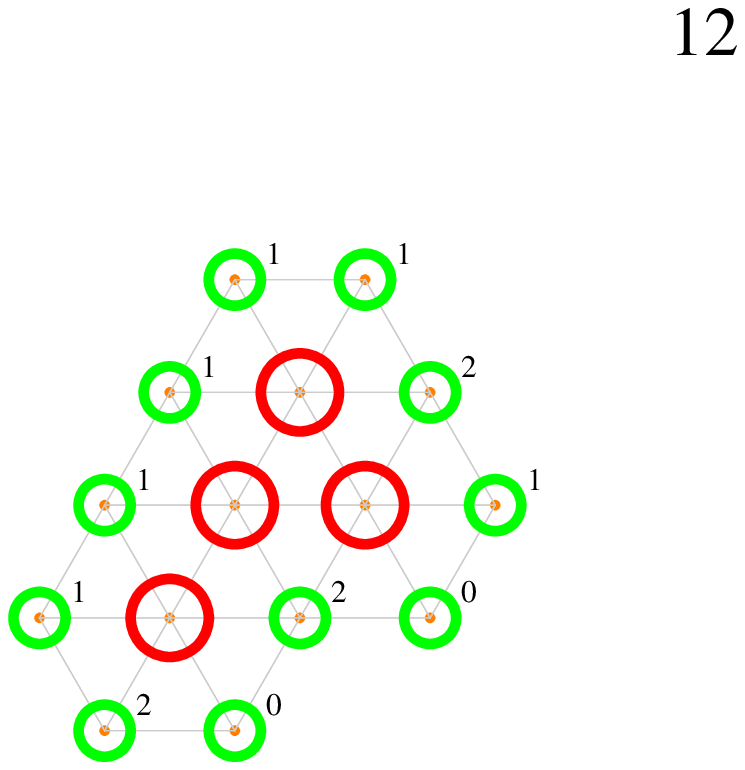}} 
\end{center}
{\bf Figure:} A situation where the point has 3 interior neighbors
and where the point can not be reduced. \\

\begin{center}
\scalebox{0.55}{\includegraphics{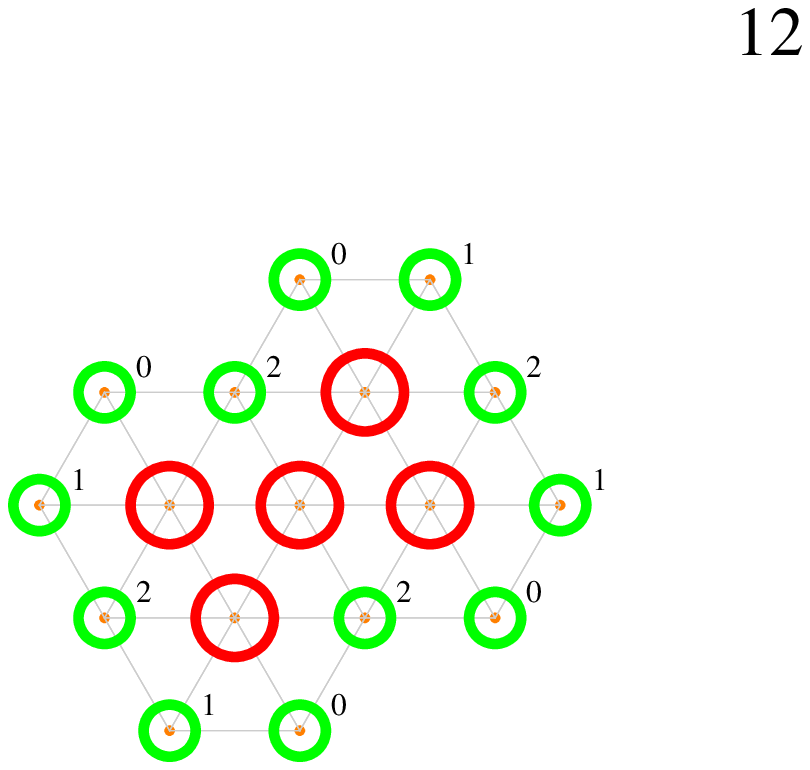}}
\end{center}
{\bf Figure:} A situation, where the point has 4 interior neighbors and where 
the point can not be removed. \\

\begin{center}
\scalebox{0.55}{\includegraphics{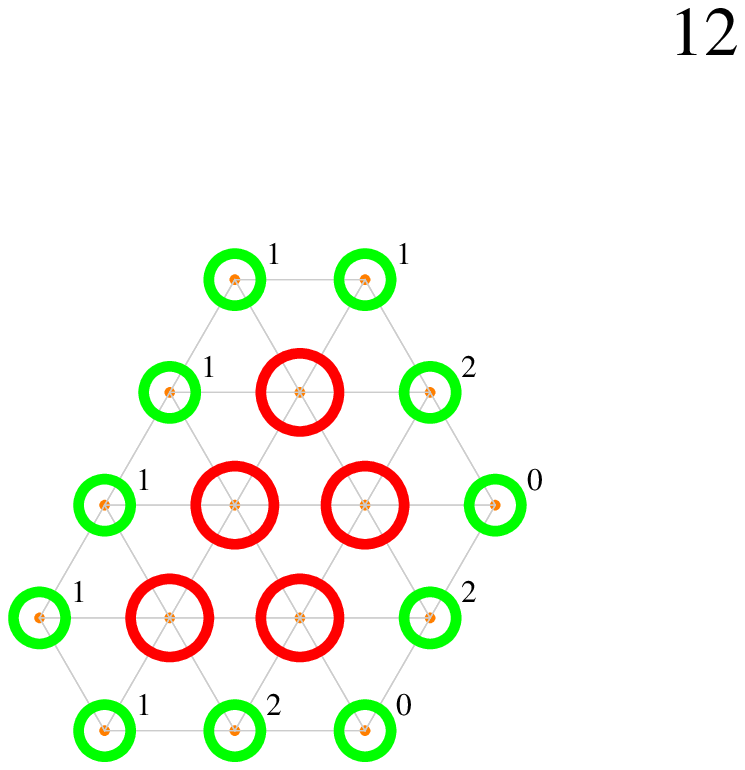}} 
\end{center} 
{\bf Figure:} A situation, where the point has 4 neighbors which are interior points and where 
the point can not be removed. \\

\begin{center}
\scalebox{0.55}{\includegraphics{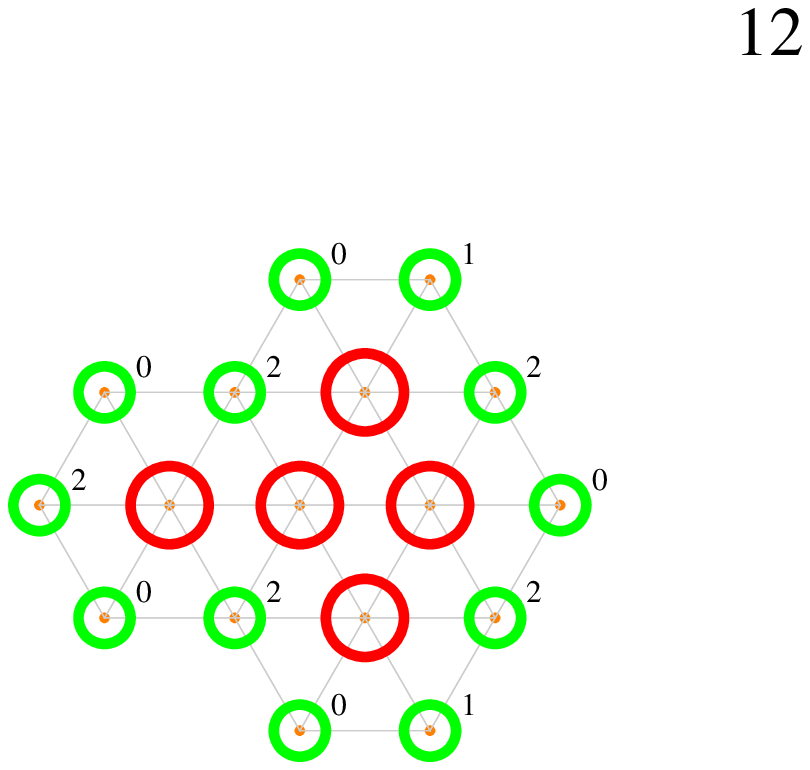}}
\end{center}
{\bf Figure:} Four interior points bounding an interior point. The middle point 
can not be removed while keeping the region a smooth region. \\

\begin{center}
\scalebox{0.55}{\includegraphics{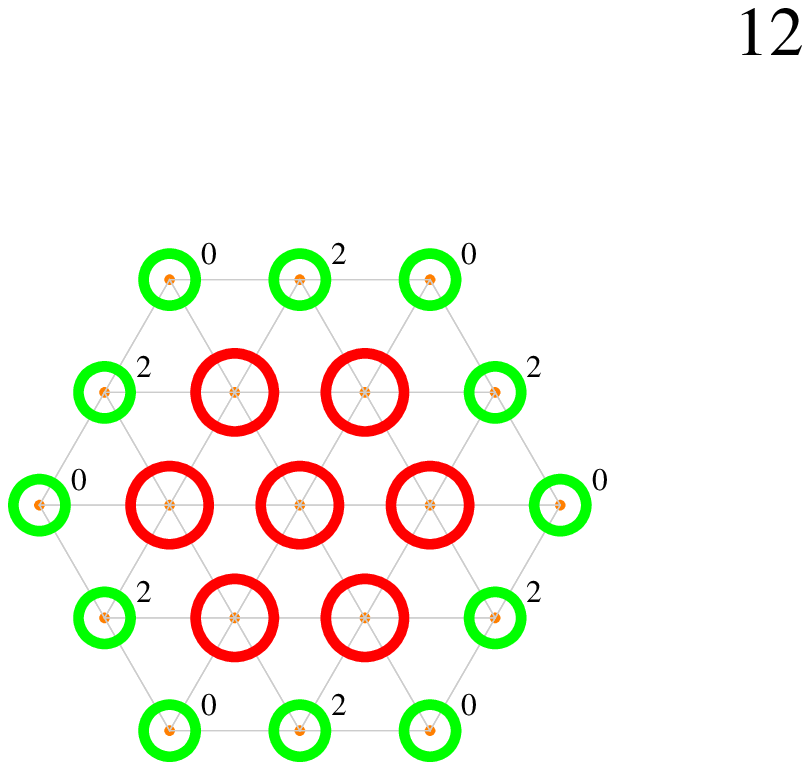}} 
\end{center}
{\bf Figure:} Having exactly 5 interior points bounding an interior point 
is not possible. We then necessarily have 6 neighbors.\\

\begin{center}
\scalebox{0.55}{\includegraphics{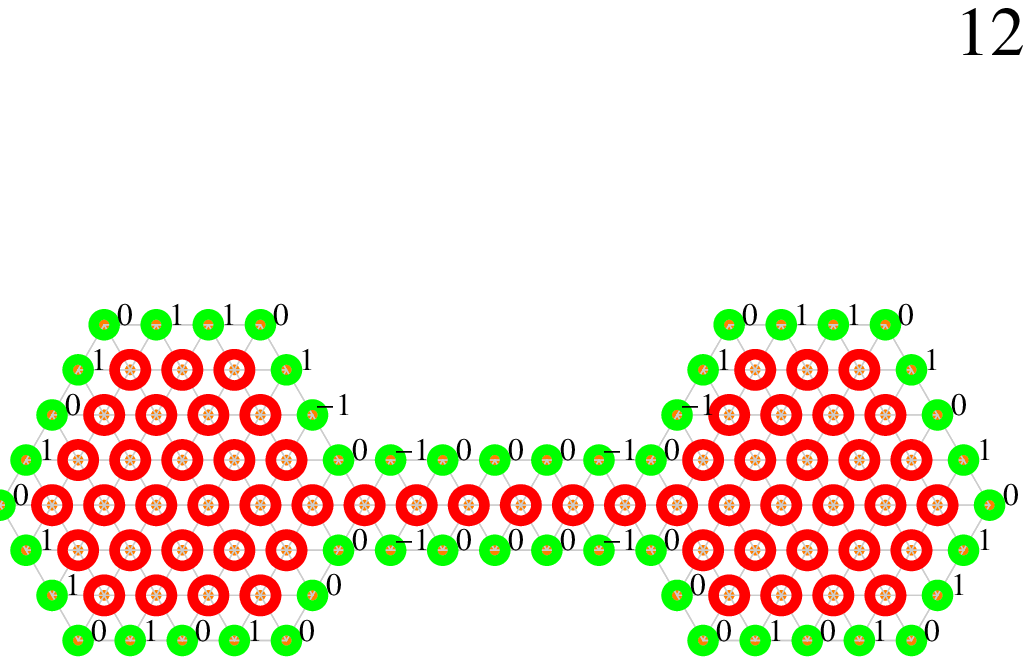}} 
\scalebox{0.55}{\includegraphics{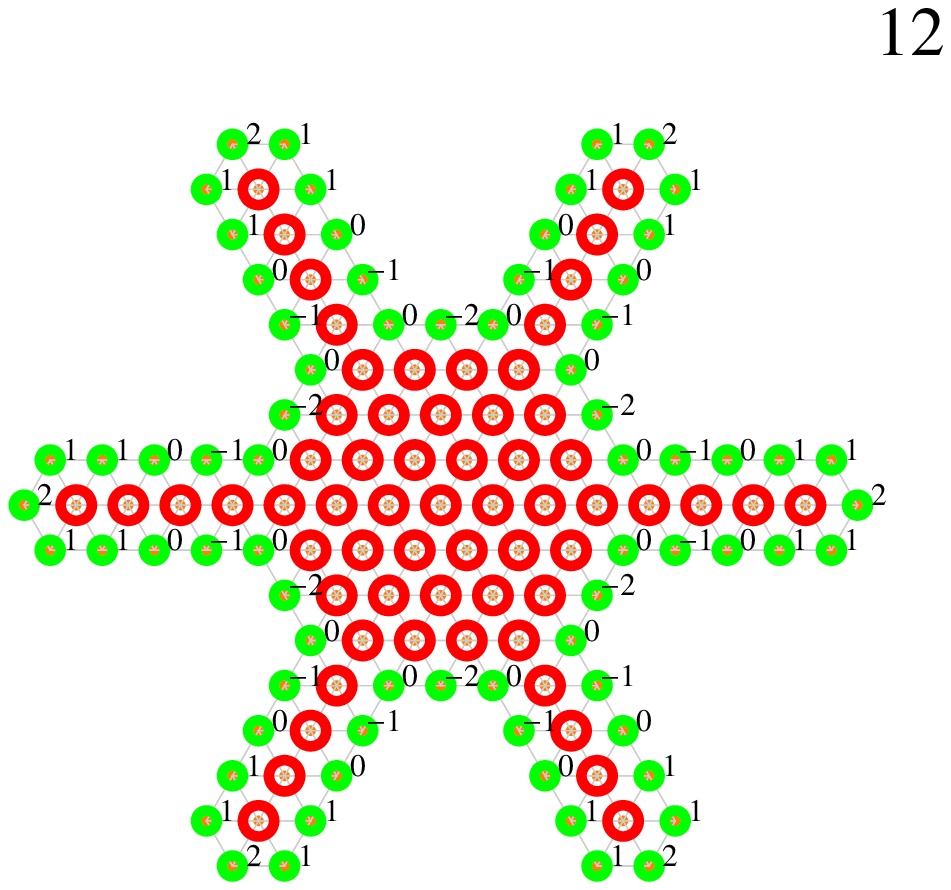}} 
\end{center}
{\bf Figure:} A bridge and branches. For the picture with the bridge, no interior point which is 
one-dimensional in ${\rm int}(G)$ can be removed. 
For the picture with the branches, no interior point which is 
two-dimensional in ${\rm int}(G)$ can be removed. This is a situation, where the branches 
first need to be trimmed.  \\

Once the etching process is over, we can again start pruning branches, or we are 
left with a region with only one interior point.
If a region $G$ can no more be pruned and edged then $H$ consists of only one point and $G$ 
consists of only $7$ points and in this case, we know the total curvature is $12$. \\

Since pruning and etching did not change the curvature and we have demonstrated that one can
reduce down every simply connected region to a situation with one interior point, 
this completes  the proof of the curvature 12 theorem. 
\end{proof} 

\section{Discrete Gauss-Bonnet}

To generalize the Umlaufsatz to domains which are not necessarily simply connected we
first {\bf define} the Euler characteristic of a region using Euler's formula: 

\begin{defn}
A {\bf face} in a domain $G$ is a triangle $(p,q,r)$ of 3 points in $G$ for which all three points 
have mutual distance $1$. An {\bf edge} in $G$ is a pair $p,q$ of points in $G$ of distance $1$. 
A {\bf vertex} is a point in $G$. Denote by $f$ the number of faces in $G$, by $e$ the number of 
edges and $v=|G|$ the number of vertices. The {\bf Euler characteristic} $\chi(G)$ of the 
domain $G$ is defined as $\chi(G) = v-e+f$. 
\end{defn}

Example: for a simply connected region, the Euler characteristic is $1$. 

\begin{lemma}
The Euler characteristic does not change under the pruning and etching operations defined above: 
both removing an end point of a one dimensional branch, as well as removing a two dimensional 
point from a ridge does not change it. 
\end{lemma}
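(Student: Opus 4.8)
The plan is to verify directly, via the Euler-characteristic formula $\chi(G) = v - e + f$, that each of the two elementary reductions — pruning an end point of a one-dimensional branch, and etching a two-dimensional point from a ridge — leaves $v - e + f$ unchanged. Since the region $G$ is determined by its interior $H$ (by the Lemma that $G = \bigcup_{q\in H} B_1(q)$), removing an interior point $p$ may also force the removal of some boundary vertices that were adjacent only to $p$; so the bookkeeping must track the change in $v$, $e$, $f$ coming from $p$ itself, from the boundary vertices it drags along, and from the edges and faces incident to all of these.

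First I would set up the local count. Removing a vertex $x$ of degree $d(x)$ which lies in $t(x)$ triangles changes the triple $(v,e,f)$ by $(-1, -d(x), -t(x))$, hence changes $\chi$ by $-1 + d(x) - t(x)$. In the flat triangular lattice $X$, an interior vertex has $d = 6$ and $t = 6$, and more generally for any vertex whose link $S_1(x)$ in $G$ is a single arc with $k$ edges (a \textbf{boundary} vertex of "opening" $k$), we have $d(x) = k+1$ and $t(x) = k$, so removing it alone changes $\chi$ by $-1 + (k+1) - k = 0$. This is the key local identity: deleting a vertex whose link is a path changes $\chi$ by $0$, deleting a vertex whose link is a cycle (an interior point, $d=6$, $t=6$) changes $\chi$ by $-1+6-6 = 0$ as well. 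So in isolation every single vertex deletion is $\chi$-neutral; what remains is to check that when we delete an interior point $p$ together with the boundary vertices it orphans, the \emph{total} change still telescopes to zero.

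Then I would go case by case through exactly the configurations already enumerated in the proof of the Trimming-a-tree proposition (the figures labelled \texttt{1d\_1}, \texttt{1d\_2a/b}, \texttt{1d\_3a/b/c} for pruning, and \texttt{2d\_2}, \texttt{2d\_3a}, \texttt{2d\_4a/b/c} for etching). In the pruning case $p$ is one-dimensional in $H$, i.e. $p$ has exactly one interior neighbour, and one checks that removing $p$ turns $p$ into a boundary point of the smaller region or removes it outright along with one orphaned boundary vertex; either way the link structures before and after, together with the local identity above, give net $\Delta\chi = 0$. In the etching case $p$ is two-dimensional in $H$ and lies on an end-ridge; here $p$ has two interior neighbours and removing it converts it to a boundary vertex whose link is a path — again net $\Delta\chi = 0$ by the same identity. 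Because the reductions only ever affect vertices, edges, and faces inside $B_2(p)$, "curvature is local"-style reasoning shows only these finitely many local pictures matter.

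The main obstacle is the simultaneous deletion of orphaned boundary vertices: I must be sure I have identified \emph{all} boundary vertices of the old region that fail to survive in $\bigcup_{q\in H\setminus\{p\}} B_1(q)$, and correctly count the edges/faces that disappear with them without double-counting the ones shared with surviving vertices. Here conditions (iv) and (v) in the definition of a domain do the work: (iv) forces every nearby lattice-adjacency to already be an edge, so no hidden edges appear or vanish, and (v) prevents the interior from being pinched in a way that would create extra orphaned vertices. Once the accounting is pinned down for the handful of admissible local pictures, summing the contributions gives $\Delta\chi = 0$ in every case, which is the assertion of the lemma.
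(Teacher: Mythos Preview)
Your approach is sound and more explicit than the paper's, but there is one genuine slip. You write that deleting a vertex whose link is a full cycle (an interior point with $d=6$, $t=6$) changes $\chi$ by $-1+6-6=0$; in fact $-1+6-6=-1$. So your claimed ``every single vertex deletion is $\chi$-neutral'' is false in that case. Fortunately this does not damage the argument, because in both pruning and etching the interior point $p$ is \emph{not} deleted from $G$: since $p$ has at least one interior neighbour $q$, we have $p\in B_1(q)\subset G_{\mathrm{new}}$, and $p$ merely becomes a boundary vertex. The only vertices actually removed from $G$ are the orphaned boundary vertices adjacent (among interior points) to $p$ alone, and for each of those the link in $G$ is a single arc (this is where the domain axioms (i)--(iii) are used), so your path-link identity $-1+(k+1)-k=0$ applies. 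You should tighten the write-up to reflect this: the ``key local identity'' you need is only the path-link case, and you should argue that removing the orphaned vertices one at a time (say, starting from an end of the arc of orphans in $S_1(p)$) keeps each successive link a path, so the contributions really do telescope to $0$. Once that is said, the case enumeration over the \texttt{1d} and \texttt{2d} figures is routine.

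The paper takes a different, more compressed route: it records the identity $|H|=2f-e+\chi$ for a smooth region (equivalently $3f=2e-|\delta G|$ together with $|\delta G|=\#\{\text{boundary vertices}\}$), and infers invariance of $\chi$ from the fact that pruning or etching lowers $|H|$ by one while simultaneously lowering $2f-e$ by one. That argument is slicker but, as written, still leaves the ``$2f-e$ drops by one'' step to the reader; your vertex-by-vertex bookkeeping makes that step completely explicit, at the cost of the case analysis.
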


\begin{proof}
The number of interior points of a smooth region is 
$2f-e+\chi$ and which can be proved by adding faces:
each face added is equivalent to adding 2 edges.
\end{proof}

\begin{center}
\scalebox{0.55}{\includegraphics{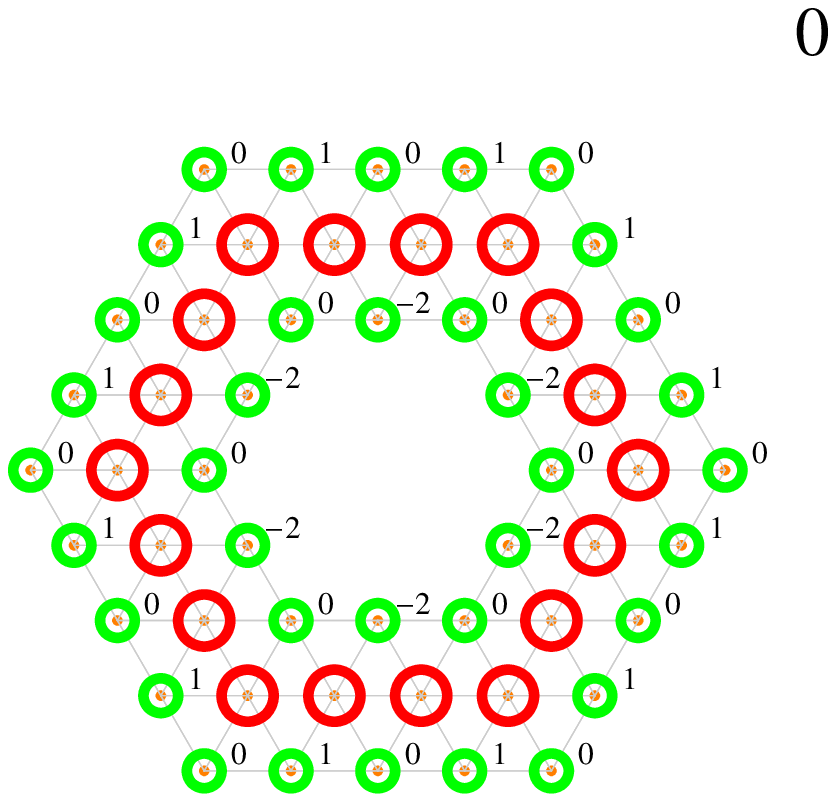}}  \\
\end{center}
{\bf Figure:}  A region, where no interior point can be removed any more and which has more than
one interior point is not simply connected. \\

{\bf Remark.} The Euler characteristic of ${\rm int}(G)$ and $G$ is the same if $G$ is a smooth region. 

\begin{thm}[Discrete Gauss-Bonnet theorem]
If $G$ is a finite smooth domain $G$ with boundary $C$, then
$$  \sum_{p \in C} K(p)  = 12 \chi(G)  \; . $$
\end{thm}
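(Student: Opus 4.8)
The plan is to reduce the general smooth domain case to the already-proven Curvature 12 theorem (Theorem for simply connected domains) by a handle-cutting argument that tracks both the total boundary curvature and the Euler characteristic. First I would verify that for a finite smooth domain $G$, the Euler characteristic $\chi(G)=v-e+f$ is an integer that behaves additively under the natural surgery operations, using the relation (from the lemma above) that the number of interior points equals $2f-e+\chi$, and the remark that $\chi(\mathrm{int}(G))=\chi(G)$. The key structural input is that a smooth domain, being a subgraph of the flat triangular lattice $X$ realized in the plane, is a planar polygonal region whose interior $\mathrm{int}(G)$ is a disjoint union of "thickened" pieces; its homotopy type is that of a wedge of circles, so $\chi(G) = 1 - b_1$ where $b_1$ is the first Betti number (number of independent loops). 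Thus the theorem is equivalent to showing that attaching or detaching one "handle" changes the total boundary curvature by exactly $-12$, matching the change $\chi \mapsto \chi-1$.

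The main step is therefore a cutting lemma: if $G$ is a finite smooth domain that is not simply connected, I would locate a "bridge" — a connected one-dimensional component $B$ of $H_1=\{$one-dimensional interior points$\}$ whose removal (together with the associated boundary vertices, via the fact that $\mathrm{int}(G)$ determines $G$) disconnects a loop, i.e. reduces $b_1$ by one. Such a bridge exists because the auxiliary graph whose vertices are the ridges (components of $H_2$) and whose edges are the bridges carries all of the first homology, so if $b_1 \geq 1$ this graph contains a cycle and hence an edge lying on a cycle. Cutting along that bridge splits $G$ either into two smooth domains $G_1, G_2$ with $G = G_1 \cup G_2$ along a short shared piece, or into a single smooth domain with one fewer loop; in the first (separating) case one checks $\chi(G) = \chi(G_1)+\chi(G_2) - 1$ while the boundary curvatures satisfy $\sum_C K = \sum_{C_1} K + \sum_{C_2} K - 12$, and in the non-separating case $\chi$ drops by $1$ and the curvature drops by $12$. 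Because curvature is local (the "Curvature is local" lemma), both bookkeeping identities reduce to finitely many local configurations in discs of radius $2$ around the cut, exactly as in the pruning/etching analysis; the pruning and etching operations themselves, already shown to preserve curvature, also preserve $\chi$ by the lemma above, so we may first normalize each piece before cutting. Iterating the cutting lemma until every piece is simply connected, then applying the Curvature 12 theorem to each of the $\chi(G)$-many... more precisely: after all cuts we have a disjoint union of $c$ simply connected smooth domains, each contributing boundary curvature $12$, and $\chi(G) = c - (\text{number of cuts})$ with each cut costing $12$ in curvature, giving $\sum_C K = 12c - 12(\text{cuts}) = 12\chi(G)$.

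The hard part will be establishing the cutting lemma cleanly — specifically, showing that one can always perform the cut while keeping both resulting pieces (or the single resulting piece) \emph{smooth} domains in the precise sense of the definition, i.e. that the complement remains a domain and conditions (i)–(v) survive the surgery. This is the same subtlety that makes the "Trimming a tree" proposition delicate, and it is genuinely global: one must argue that an innermost bridge on a cycle has a neighborhood in which the cut can be localized without creating tangencies, fissures, or two-dimensional boundary. I would handle this by choosing the bridge to be a component of $H_1$ that is "end" in the ridge-bridge tree-with-cycles structure after first contracting a maximal forest, so that near the cut the local picture is one of the finitely many already-enumerated pruning/etching figures, and the verification that curvature and $\chi$ change by $-12$ and $-1$ respectively is a finite check. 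An alternative, cleaner route that sidesteps some of this is to prove directly, via Euler's formula and the curvature identity $K_1(p)=6-|S_1(p)|$ together with a relation between $K$ and $K_1$ (promised at the end of the article), that $\sum_{p\in C}K(p)$ is a linear functional of $(v,e,f)$ restricted to smooth domains that must equal $12\chi$; I would attempt this algebraic shortcut first and fall back on the surgery argument if the needed $K$–$K_1$ relation is not strong enough.
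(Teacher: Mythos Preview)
Your surgery-by-cutting approach is viable in principle but it is not what the paper does, and the paper's route is substantially cleaner. Instead of \emph{cutting bridges} to lower $b_1$, the paper \emph{fills holes}: given a bounded simply connected component $W$ of the complement (a ``hole''), one observes the pointwise complementarity $|S_1(p)\cap W|+|S_1(p)\cap G|=6$ and $|S_2(p)\cap W|+|S_2(p)\cap G|=12$ along the shared boundary, whence $K_W(p)+K_G(p)=0$ there. By the already-proven Umlaufsatz the hole has total curvature $12$, so filling it raises both $\sum_C K$ and $\chi$ by exactly $12$ and $1$ respectively, with no delicate smoothness verification: the filled region is automatically a smooth domain with one fewer hole. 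Together with additivity over connected components this reduces immediately to the simply connected case. The paper even remarks that ring-cutting (your approach) is an alternative, but its figures show that intermediate stages of a cut are typically \emph{not} smooth domains, which is exactly the ``hard part'' you flag and which hole-filling sidesteps entirely.

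Two smaller points. First, a sign slip: in your non-separating case, cutting a bridge on a cycle \emph{increases} $\chi$ by $1$ (and the curvature by $12$), it does not drop; your final bookkeeping $\sum_C K=12c-12(\text{cuts})$ is nonetheless correct once the signs are straightened out. Second, your proposed algebraic shortcut via a $K$--$K_1$ relation does not go through independently: in the paper the identity $\sum_C(12-|S_2|)=24\chi$ is a \emph{corollary} of the theorem you are trying to prove, not an input to it, so invoking that relation would be circular. If you want to avoid surgery altogether you would need a direct combinatorial identity expressing $\sum_C |S_2(p)|$ in terms of $v,e,f$ for smooth domains, and no such identity is established elsewhere in the paper.
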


We could use the same pruning-etching technique as before. However, pruning and etching can lead
to final situations which have no end points like a ring. Instead of classifying
all these final situations, it is easier
to reduce the general situation to a simply connected situation. \\

There are two ways, how to change the topology: 
\begin{itemize}
\item build bridges between different connected components. 
\item fill holes to make the region simply connected
\end{itemize}
Merging different unconnected components is no problem. As long as their complement is a smooth 
region too, both the Euler characteristic as well as the total curvature add up.  \\

1. We can assume the region to be connected, because
both curvature as well as Euler characteristic are additive with respect to adding disjoint domains. 
To illustrate this more, we can also join two separated regions along with a one dimensional bridge.
The curvature drops by $12$, the number of connected components drops by $1$. 

\begin{center}
\scalebox{0.55}{\includegraphics{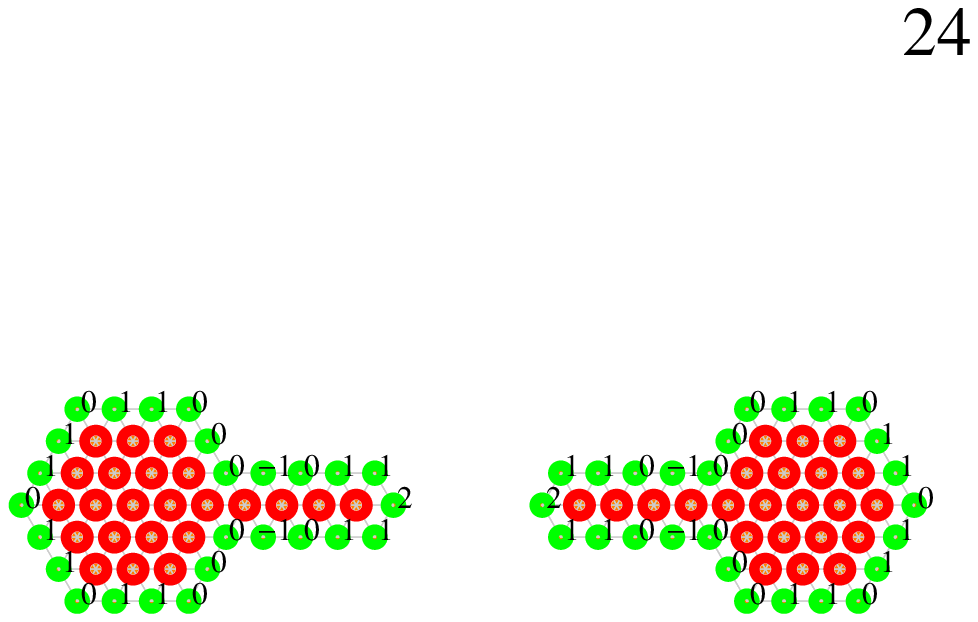}}
\scalebox{0.55}{\includegraphics{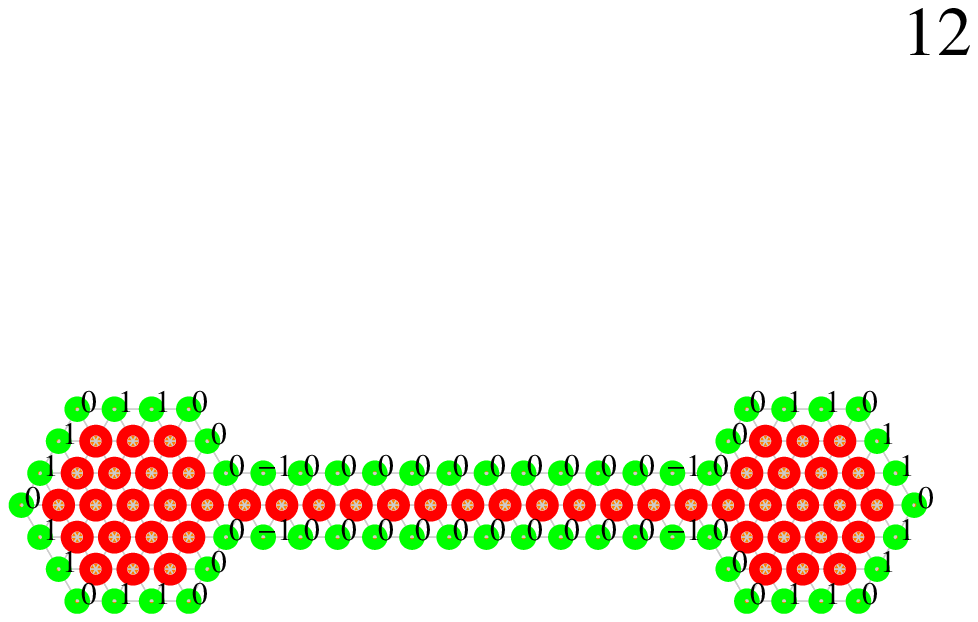}}
\end{center}
{\bf Figure:} Joining two regions changes the total curvature by 12. \\

\begin{defn}
The interior of a {\bf hole} $W$ is a bounded simply connected smooth region such that 
${\rm int}(W)$ is a component of the complement of $G$. By definition a whole $W$ and the region $G$
share a common part of the boundary. 
\end{defn} 

By the Umlaufsatz for simply connected regions, the hole has total curvature 12. A key observation is that
the point-wise curvatures at the inner boundary of $G$ enclosing the hole are just the negative of the 
corresponding point-wise curvatures of the hole. This follows almost from the definition of curvature 
and the fact that the circles $|S_1(p) \cap W| + |S_1(p) \cap G| = 6$ and 
$|S_2(p) \cap W| + |S_2(p) \cap G| = 12$ so that $K_{W}(p) + K_{G}(p) = 0$. 

\begin{center}
\scalebox{0.55}{\includegraphics{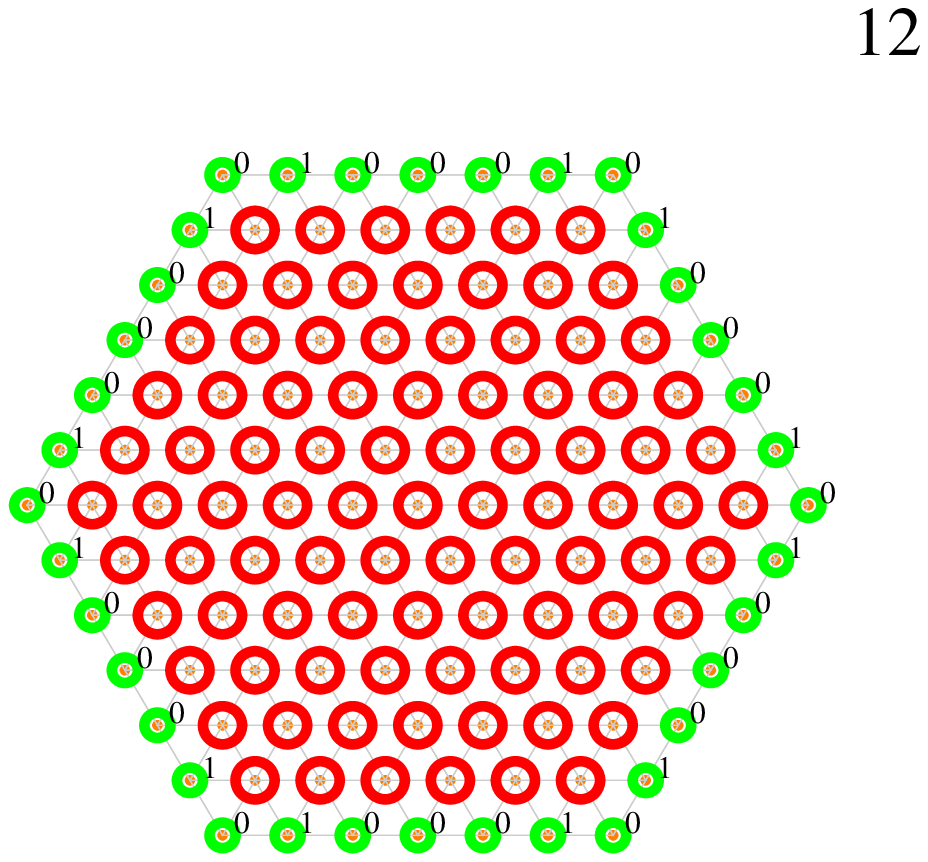}}
\scalebox{0.55}{\includegraphics{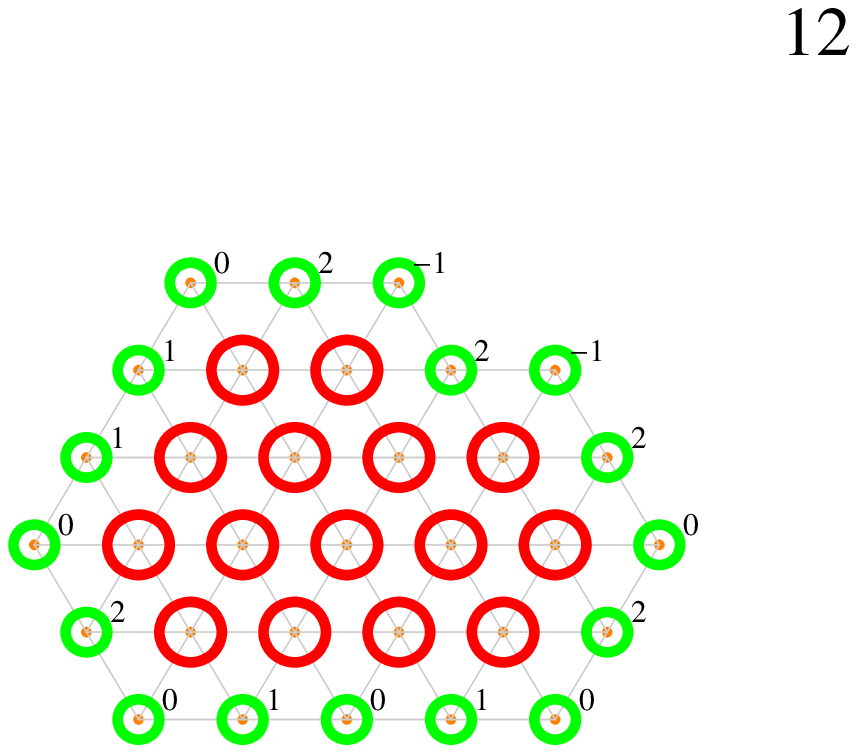}}

\scalebox{0.85}{\includegraphics{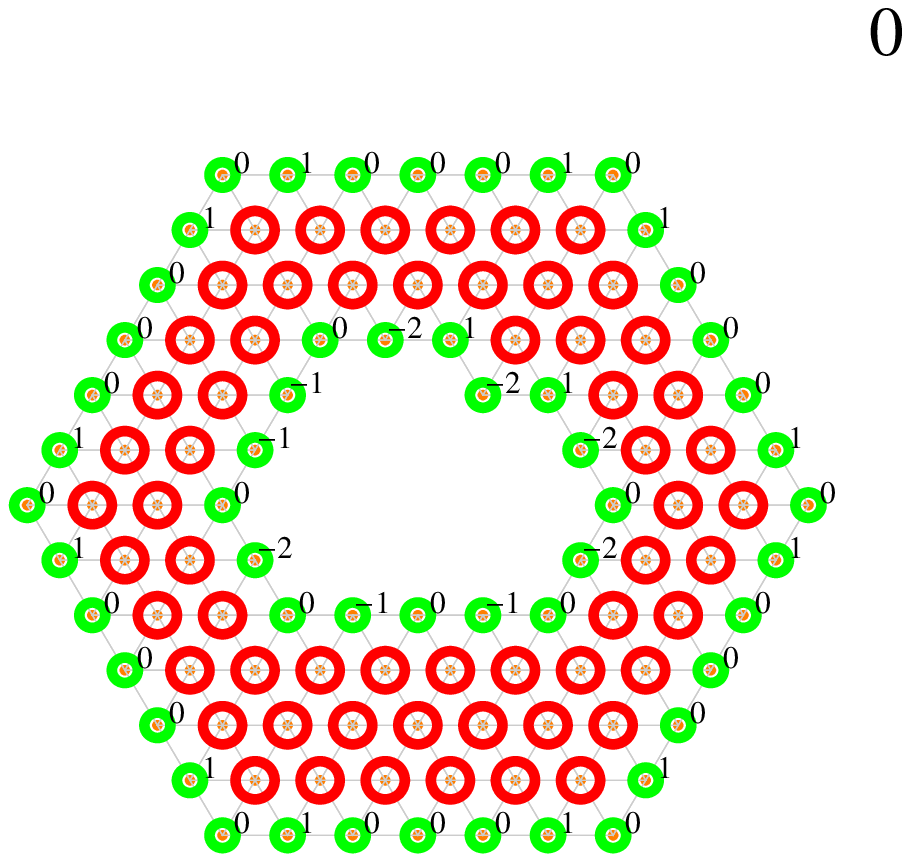}}
\end{center}
{\bf Figure:} Filling  a simply connected hole from a larger region adds to the curvature exactly the 
same amount than the total curvature of the hole. The reason is that the point-wise curvatures of the removed 
inside region matches exactly the curvatures of the inner outside region
if the inside region and the outside region have a common one-dimensional boundary. \\

This shows that if we fill a hole, the total curvature increases by $12$. Simultaneously, the 
Euler characteristic increases by $1$.  \\

Alternatively, we could also cut rings: 

\begin{center}
\scalebox{0.55}{\includegraphics{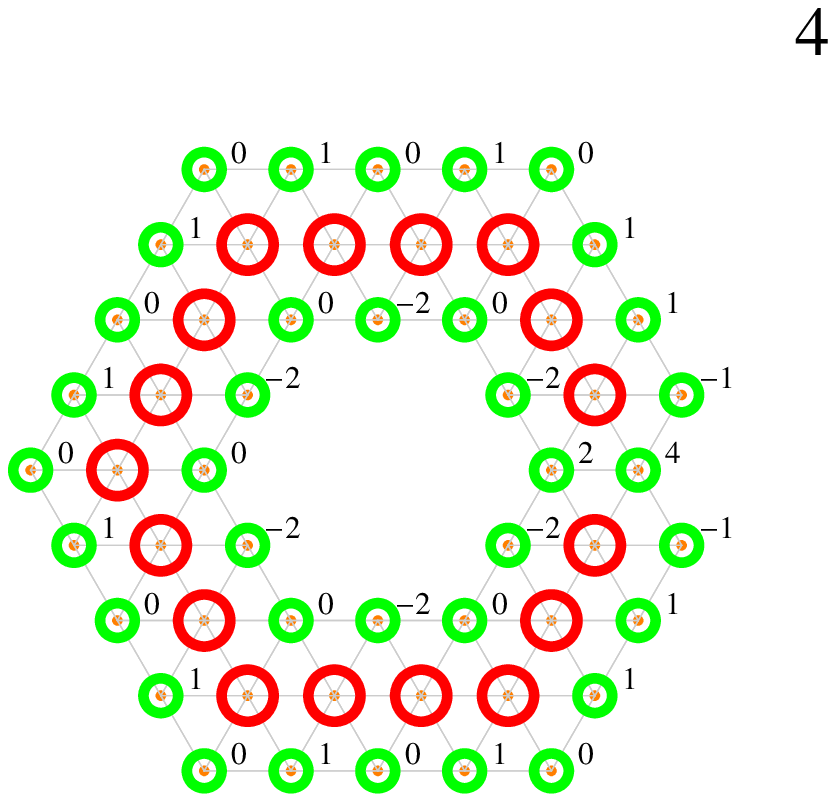}}
\scalebox{0.55}{\includegraphics{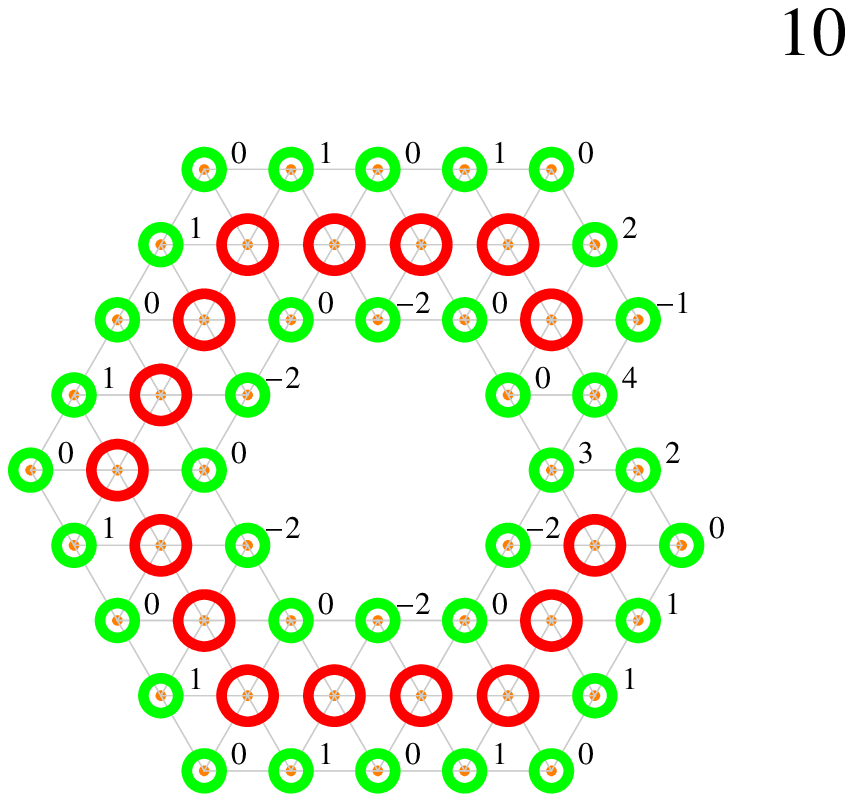}}
\end{center}

\begin{center}
\scalebox{0.55}{\includegraphics{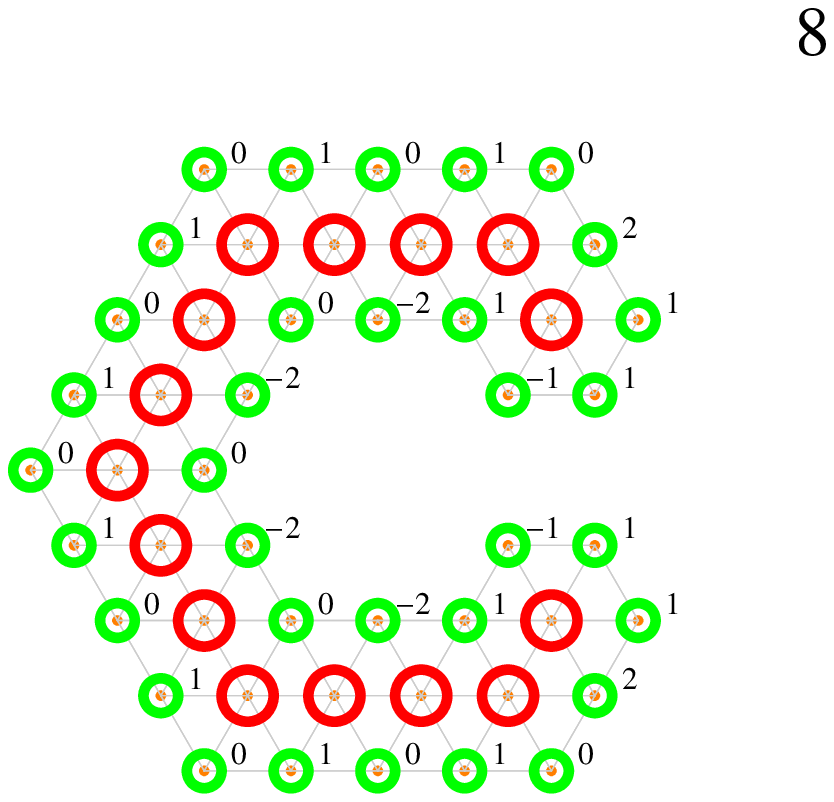}}
\scalebox{0.55}{\includegraphics{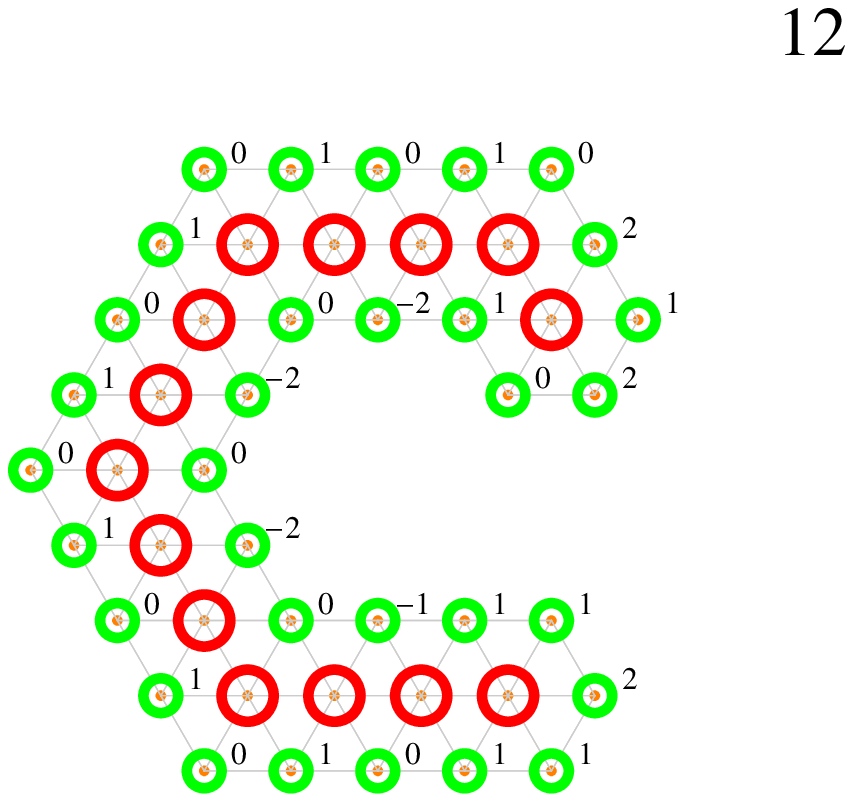}}
\end{center}
{\bf Figure:} When cutting a ring,  the curvature changes from 0 to 12. Only the
last region is a smooth region. The second last is a region but not smooth
because the complement is not a region. \\

\section{Compact flat graphs}

If we introduce identifications in the hexagonal background graph $X$, the topology of the background
space changes. Identifying points along two parallel lines for example produces a flat cylinder. 
With a triangular tiling, we can tessellate a torus. Because there is no boundary now, 
the sum of the curvatures is zero, which is the Euler characteristic. Note that there are many different
non-isometric graphs which lead to such tori. We call them {\bf twisted tori}. As {\bf graphs} they are 
different even if the number of faces, edges and vertices are fixed.

\begin{center}
\scalebox{0.40}{\includegraphics{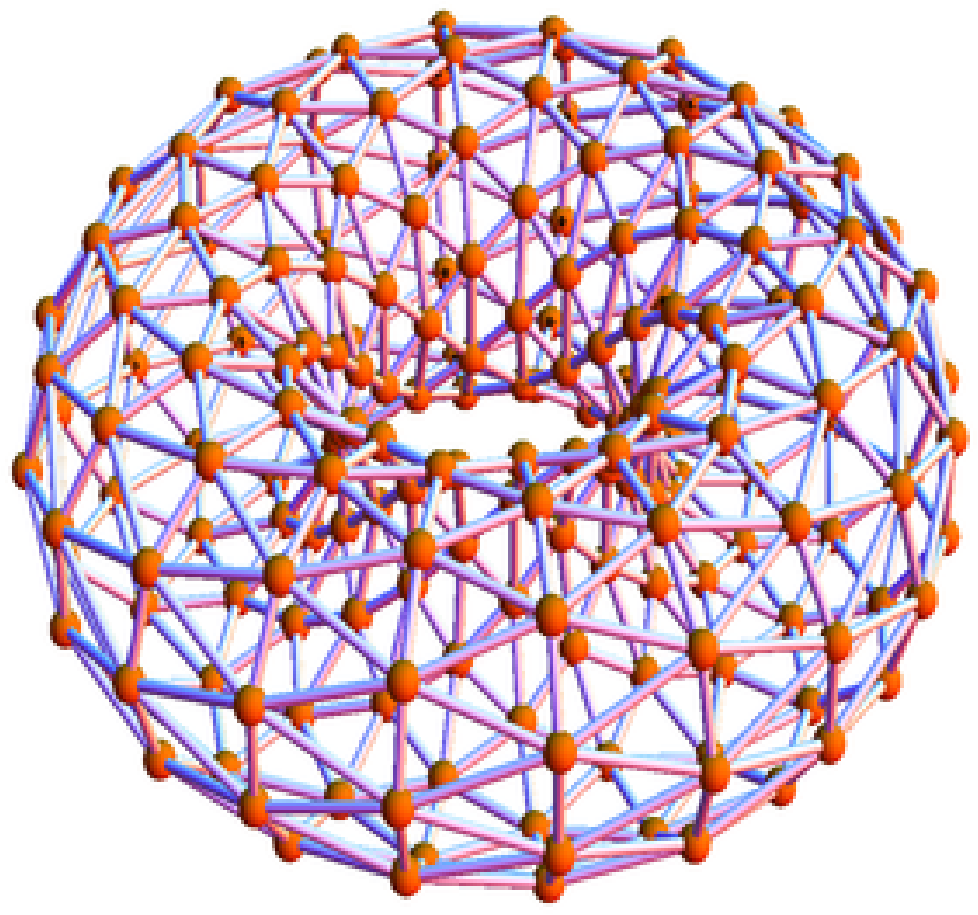}} \\
\end{center}
{\bf Figure:} A flat torus obtained by identifying opposite sides of a rectangular 
domain in a hex lattice.  The total curvature is zero.  \\

The notion of regular domain can be carried over to discrete manifolds like the twisted tori just
mentioned. Let $X$ be such a twisted background torus. We assume that it is large enough so that
$S_2(p)$ is a circle at every point. Let $G$ be a subgraph of $X$ defined as before. We still have:

\begin{thm}
If $G$ is a domain in a background torus $X$, then
$$ \sum_{p \in \delta G} K(p)  = 12 \chi(G)  \; . $$
\end{thm}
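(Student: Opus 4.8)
The plan is to reduce the torus statement to the planar Discrete Gauss-Bonnet theorem already established. The essential point is that, once $X$ is large enough that every sphere $S_2(p)$ is a genuine circle of length $12$, all of the curvature computation is \emph{local}: by the "Curvature is local" lemma, the curvature contribution at a boundary point $p$ depends only on the isomorphism type of the disc $B_2(p)$ inside $G$, and this combinatorial ball is identical whether $B_2(p)$ sits inside a planar patch of the triangular lattice or inside the torus $X$. So the pruning and etching moves, together with the hole-filling and component-merging reductions, can all be carried out verbatim inside $X$, each move changing neither the total boundary curvature nor the Euler characteristic $\chi(G)=v-e+f$.

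First I would record the local equivalence: since $X$ is a twisted torus with $S_2(p)$ a circle at every point, every vertex of $X$ still has exactly $6$ neighbours and a flat unit sphere, so the interior points of $G$, its boundary points, and the validity of the domain conditions (i)--(v) are detected inside balls of radius at most $2$, which embed isometrically into the planar lattice. Consequently the defining Lemma ($G=\bigcup_{q\in\mathrm{int}(G)} B_1(q)$) and the Euler-characteristic bookkeeping lemma carry over unchanged. Next I would run the pruning/etching argument on $G$ exactly as in the proof of the Discrete Gauss-Bonnet theorem: prune one-dimensional branches, etch end-ridges, merge disjoint components (each merge drops both $\sum K$ and $12\chi$ by $12$), and fill simply connected holes (each fill raises both by $12$, using $K_W(p)+K_G(p)=0$ along a shared one-dimensional boundary, which again is a purely local identity valid in $X$).

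The one genuinely new phenomenon is that on the torus a connected domain $G$ need not be reducible to a disc: $G$ could be an annulus winding around a nontrivial cycle of $X$, or $G$ could be all of $X$ itself. I would handle these by observing that the complement $G'=X\setminus\mathrm{int}(G)$ is, by smoothness, again a domain in $X$, and that filling every hole of $G$ and then of $G'$ eventually either (a) exhausts $X$, in which case $G=X$ has no boundary, $\sum K=0$, and $\chi(X)=0$, matching; or (b) produces a simply connected planar patch to which the Umlaufsatz applies, giving $\sum K = 12\chi = 12$. In case (a) the noncontractible annular situations collapse: cutting a ring that wraps the torus is a local move (as in the ring-cutting figure) that reduces a wrapped annulus either to a planar region or, after enough cuts, to $\emptyset$, and at each step $\sum K - 12\chi$ is preserved.

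The main obstacle I expect is precisely verifying that the reduction \emph{terminates} in the torus setting: unlike the plane, one cannot simply invoke "there is always an end-ridge" because a domain wrapping a homology cycle of $X$ has a ridge-bridge graph that may contain a loop. The fix is to first cut along one such loop --- a move that, by locality, changes $\sum K$ and $12\chi$ by the same amount (it lowers $\chi$ by $1$ and lowers $\sum K$ by $12$, just like a component merge run in reverse, or raises them both if we view it as filling) --- thereby strictly decreasing the first Betti number of the ridge-bridge graph; after finitely many such cuts the graph is a forest and the planar argument applies. Making this "cut along a nontrivial cycle" step rigorous, and checking it really is a finite sequence of elementary local deformations each preserving $\sum K - 12\chi$, is where the real work lies; everything else is inherited from the planar theorem by the locality lemma.
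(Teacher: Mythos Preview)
The paper does not actually supply a proof of this theorem: it is stated immediately after the planar Discrete Gauss--Bonnet theorem with the phrase ``We still have,'' followed only by a remark about wormhole constructions. The implicit justification is simply that the background torus is locally indistinguishable from the triangular plane (since $S_2(p)$ is assumed to be a circle at every point), so the local pruning, etching, hole-filling, and component-merging arguments carry over verbatim. In that sense your locality discussion in the first two paragraphs matches exactly the spirit of the paper's (unwritten) argument.

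Where you go further than the paper is in your third and fourth paragraphs: you correctly notice that on a torus a domain can wrap a nontrivial homology class, so the ridge--bridge graph of the interior need not be a tree and the etching process need not terminate. The paper does not address this at all. Your proposed fix---cut along a nontrivial cycle to decrease the first Betti number, checking that $\sum K$ and $12\chi$ change by the same amount---is the natural strategy, and your candid admission that making this cut rigorous as a finite sequence of smooth-domain moves is ``where the real work lies'' is accurate. One small correction: cutting an annulus wrapping a torus cycle into a disc \emph{raises} $\chi$ from $0$ to $1$ and raises $\sum K$ from $0$ to $12$ (as in the paper's ring-cutting figures), not lowers; your parenthetical has the right idea but the stated direction is backwards. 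This does not affect the invariant $\sum K - 12\chi$, which is what matters.

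So: your proposal is consistent with, and strictly more detailed than, what the paper offers. The paper treats the torus case as a corollary-by-locality; you have identified the genuine global obstruction the paper glosses over and sketched a plausible way around it.
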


{\rm Remark}: 
More flat compact graphs can be obtained using "worm hole" constructions. Let $X$ be a possibly
twisted torus as defined above and let $p$,$q$ be two points for which the discs $B_{r}(p)$ and 
$B_r(q)$ are disjoint and the spheres $S_r(p)$ and $S_r(q)$ are circles. Any orientable graph isomorphism 
between $S_r(p)$ and $S_r(q)$ produces an identification  of points in $G = X \setminus B_{r-1}(p) \cup S_r(p)
\setminus B_{r-1}(q) \cup S_r(q)$. Without identification,  the total curvature of the boundary of the 
domain $G$ is $\sum_{p \in C} K(p)  = 12 \chi(G) = -24$ because every removed disc produces curvature $-12$. 
%What happens with the total curvature of $G'$ after the identification, after which we have a domain without
%boundary with $\chi(G') = -1$? It is $-12$. \\
%If we glue together two domains of Euler characteristic $\chi$, then after gluing them together
%we get one with Euler characteristic $2\chi$. For example, gluing together two discs, we get
%a surface with total curvature $24$.
%We can also realized discrete versions of non-orientable manifolds like the Klein bottle or the 
%projective plane. 

\section{Combinatorial curvature}

In this section, we consider a more elementary Gauss-Bonnet formula. 
The curvature is again defined by a Puisaux discretization, but only circles of length $1$ appear
in the definition. It is a first order curvature. 

\begin{defn}
For a two dimensional graph with boundary, we define the {\bf combinatorial Puiseux curvature} as
$$  K_1(g) = 6-|S_1(g)| $$
for interior points and 
$$  K_1(g) = 3-|S_1(g)| $$ 
for boundary points. 
\end{defn}

For this combinatorial curvature, Gauss-Bonnet is much  easier. For subgraphs 
% CORRECTION
of hexagonal lattice
the boundary curvature is almost trivially equal to $6$ because the curvature is related to angles
of the corresponding polygon: for a boundary point, $K_1(p) \pi/3$ is the interior angle of the polygon. 
Because $\sum_p K_1(p) \pi/3 = 2\pi$ by the polygonal version of the Umlaufsatz, we have $\sum_p K_1(p) = 6$. \\

Actually, Gauss-Bonnet holds in great generally for arbitrary two-dimensional graphs with or without boundary. 
Since it is so closely related to the Euler characteristic, we should the attribute it to Euler,
even so we are not aware that Euler considered $K_1(g)$, nor that he looked at the dimension of a graph. \\

%CORRECTIOn

\begin{thm}[Combinatorial Gauss-Bonnet]
Assume $G$ is a two-dimensional finite graph for which the boundary is either empty or forms itself a one dimensional set. 
Then 
$$  \sum_{g \in G} K_1(g) = 6 \chi(G)  \; . $$ 
\end{thm}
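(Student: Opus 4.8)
The plan is to prove the identity $\sum_{g\in G}K_1(g)=6\chi(G)$ by a direct double-counting argument, distributing each face, edge, and vertex of $G$ appropriately among the vertices and comparing with the definition of $K_1$. First I would recall Euler's formula in the bookkeeping form $\chi(G)=v-e+f$, and then rewrite the left-hand side in terms of $v$, $e$, and $f$ by summing the local curvatures. Writing $V_{\mathrm{int}}$ and $V_{\partial}$ for the sets of interior and boundary vertices, we have
\[
\sum_{g\in G}K_1(g)=\sum_{g\in V_{\mathrm{int}}}\bigl(6-|S_1(g)|\bigr)+\sum_{g\in V_{\partial}}\bigl(3-|S_1(g)|\bigr).
\]
The key combinatorial observation is that $\sum_{g\in G}|S_1(g)|$ counts each edge of $G$ a controlled number of times: an edge $(p,q)$ contributes to $|S_1(g)|$ exactly once for every triangle of $G$ containing that edge — equivalently, for every common neighbor of $p$ and $q$. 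So $\sum_{g\in G}|S_1(g)|$ equals the number of (vertex, edge-in-its-unit-sphere) incidences, which is $2\cdot(\text{number of incidences of an edge with a triangle through it})$, and since each triangle has $3$ edges this is $6f$. Meanwhile $\sum_{g\in G}6=6v$ and the correction from boundary points replaces $6$ by $3$ for the $|V_\partial|$ boundary vertices; the hard part is showing that $\sum_{g\in G}(\text{weight of }g)-\,6f$ collapses to $6(v-e+f)$, which forces the identity
\[
\,6v-3|V_\partial| - 6f \;=\; 6(v-e+f)\quad\text{i.e.}\quad 2e = |V_\partial| + 4f,
\]
a relation I would verify by counting edge–face incidences: each face uses $3$ edges, each interior edge lies in $2$ faces, each boundary edge lies in exactly $1$ face, so $3f = 2(e-b)+b = 2e-b$ where $b$ is the number of boundary edges, and then I must reconcile $b$ with $|V_\partial|$ using that the boundary is a one-dimensional set (a disjoint union of cycles), so $b=|V_\partial|$.

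The main obstacle is exactly this last reconciliation: making rigorous that in a two-dimensional finite graph $G$ whose boundary forms a one-dimensional set, the boundary vertices and boundary edges are in bijection with the edges and vertices of a disjoint union of simple cycles, so that the count $b=|V_\partial|$ and the "each boundary edge is in exactly one face" claim both hold. For this I would invoke the structure of interior points (each interior vertex has $|S_1|=6$ and its unit sphere is a hexagon, i.e. a closed circle), the definition of boundary points, and the hypothesis that $\partial G$ is one-dimensional, to conclude that around each boundary vertex the link is a single arc, hence each boundary vertex has exactly two boundary edges, giving the cycle structure. Once these local-to-global facts are in place, the arithmetic closes: $6v-3b-6f = 6v - 3(2e-3f)\cdot(\text{adjust})\ldots$ — more cleanly, substituting $b=2e-3f$ into $6v-3b-6f$ gives $6v-3(2e-3f)-6f=6v-6e+3f$, which is \emph{not} $6\chi$ unless one also accounts for the faces lost at the boundary; so I would instead track the quantity carefully via the two independent relations $3f=2e-b$ and $b=|V_\partial|$ and confirm they yield precisely $2e=|V_\partial|+\text{(something)}$ matching $6\chi(G)$, possibly after noticing that an alternative and cleaner route is available.

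The cleaner route, which I would actually present, is to reduce to the interior-point case already handled: by the remark preceding the theorem, $K_1$ restricted to a two-dimensional graph is just a reformulation of Euler's formula, so I would prove $\sum_g K_1(g)=6\chi(G)$ by induction on the number of boundary vertices, removing one boundary vertex at a time and checking that both sides change by the same amount. Removing a boundary vertex $g$ of degree $d$ in $G$ deletes $1$ vertex, $d$ edges, and (by the one-dimensionality of the boundary and the structure of the link) exactly $d-1$ faces — the faces of the fan around $g$ — so $\chi$ changes by $-1+d-(d-1)=0$, while the left side loses $K_1(g)=3-|S_1(g)|=3-(d-1)=4-d$ from $g$ itself and each of the (at most two) surviving boundary neighbors of $g$ sees its $|S_1|$ drop by the number of shared triangles; a short case check (the same finite list the paper uses for the curvature-$12$ theorem) shows the net change on the left is also $0$. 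The base case is a single triangle or an edge or a vertex, where both sides equal $6$ (or the appropriate value), and the empty-boundary case is the known interior-point identity. The main obstacle remains bounding and enumerating the local configurations at a removed boundary vertex, but since the boundary is one-dimensional this is a genuinely finite and small check.
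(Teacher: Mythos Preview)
Your direct double-counting approach is the right idea and is close to the paper's, but it contains a concrete miscount that derails the computation. The quantity $\sum_{g\in G}|S_1(g)|$ is the number of pairs (vertex $g$, edge $e\subset S_1(g)$); each such pair determines a unique triangle (namely $g$ together with $e$), and conversely each triangle gives exactly three such pairs, one per vertex. Hence $\sum_g|S_1(g)|=3f$, not $6f$; your extra factor of $2$ is spurious. With the correct value the arithmetic closes immediately: $\sum_g K_1(g)=6|V_{\rm int}|+3|V_\partial|-3f=6v-3|V_\partial|-3f$, and combining $3f=2e-b$ (counting edge--face incidences) with $b=|V_\partial|$ (the boundary is a union of cycles) gives $6v-3|V_\partial|-3f=6v-6e+6f=6\chi(G)$. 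So the relation you were hunting for, and could not make come out, is $2e=|V_\partial|+3f$, not $2e=|V_\partial|+4f$; the ``not $6\chi$'' discrepancy you flagged was entirely an artifact of the $6f$ error.

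The paper organizes the same bookkeeping a bit differently: it first treats the boundaryless case via the two identities $3f=2e$ and $\sum_g|S_1(g)|=2e$, and then reduces the bounded case to it by coning off each boundary cycle (adding one new vertex per cycle and joining it to every vertex of that cycle), tracking how $\chi$ and the curvatures change. Your corrected direct count is arguably cleaner since it handles both cases at once, but it relies on exactly the two facts you correctly identified as the crux: that every boundary edge lies in exactly one face, and that $b=|V_\partial|$ because the boundary is a disjoint union of cycles. Your fallback ``cleaner route'' by removing boundary vertices one at a time should be dropped: it is not needed once the count is fixed, and as stated it has real problems (removing a boundary vertex need not leave a graph satisfying the hypotheses, the proposed base cases are not two-dimensional graphs, and the boundaryless case still has to be proved separately).
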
 

{\bf Remarks.} \\
1) This result does not need the rigid requirements on the "domain" as before nor does the graph have to be part of $X$; 
it works for any two-dimensional graph, with or without boundary. \\
%2) I found the result in the summer of 2009 after exploring the limitations of the curvature 12 theorem for
%general polyhedra (where it becomes a theorem 60 theorem but is more subtle) 
%but had not being aware that the result seems known already. But we have seen the result only in polyhedral setups 
2) The result appears in a different formulation, which does not make its Gauss-Bonnet nature evident:
the Princeton Companion to Mathematics \cite{princetonguide} mentions on page 832 the formula
$\sum_n (6-n) f_n = 12$,  where $f_n$ is the number $n$-hedral faces and summation is
over all faces. This is an equivalent formulation, but it makes the Gauss-Bonnet character 
less evident. \\

Note that the combinatorial Gauss-Bonnet theorem is {\bf entirely graph theoretical}. 
It avoids the pitfalls with the definition, what a polyhedron is \cite{lakatos}.
(Common definitions of "polyhedra" refer to an ambient Euclidean space or 
impose additional structure on a graph).
We can take a general finite graph which is two-dimensional at each point. Its points
are either boundary points, points where the unit sphere is one-dimensional but not closed,
or an interior points, where the unit sphere is a circle, a simple closed graph without boundary.

\begin{proof}
Assume first that the graph $G$ has no boundary. For a two-dimensional graph, all faces necessarily 
are triangles. Therefore, the number of faces $f$ and the number of edges $e$ are related by the 
{\bf dimensionality formula}
$$  3f=2e  \; . $$ 
Furthermore, we have the {\bf edge formula}
$$  \sum_g |S_1(g)| = 2e $$ 
which is obtained by counting edges in a different way. 

\begin{center}
\parbox{13.8cm}{
\parbox{6.5cm}{\begin{center}\scalebox{0.28}{\includegraphics{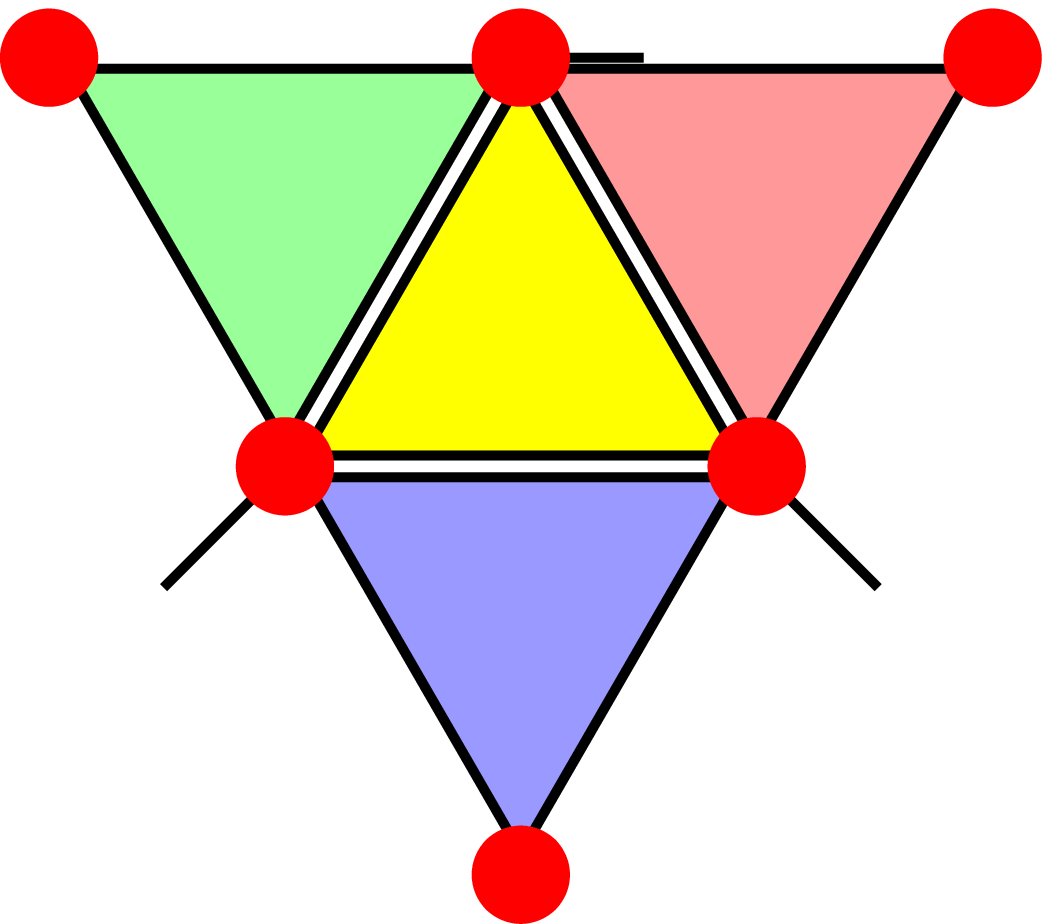}}\end{center}}
\parbox{6.5cm}{\begin{center}\scalebox{0.28}{\includegraphics{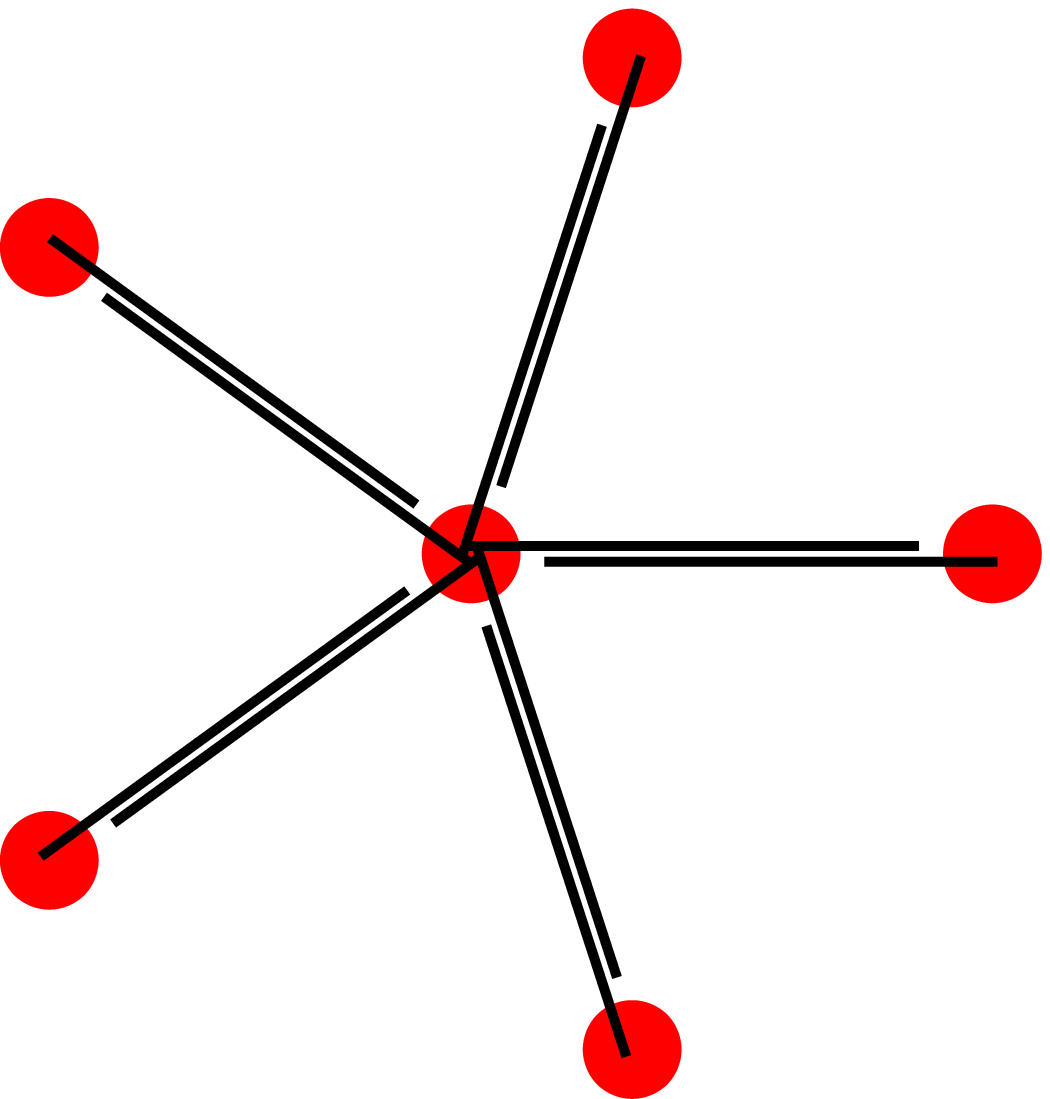}}\end{center}}
}
\end{center}
\parbox{13.8cm}{
\parbox{6.5cm}{{\bf Figure:} the identity $3f = 2e$.}
\parbox{6.5cm}{{\bf Figure:} the identity $\sum_g |S_1(g)| = 2d$.}
}

Using the definition of the Euler characteristic, and these two formulas,  we compute
\begin{eqnarray*}
 6 \chi(G) &=& 6 f - 6 e + 6 v = -2e + 6 v  \\
           &=& - \sum_{g \in G} |S_1(g)| + 6v = - \sum_{g \in G} (|S_1(g)|-6)=\sum_{g \in G} K_1(g) \; . 
\end{eqnarray*}
This finishes the proof in the case of a graph without boundary.  \\

The case with boundary can be reduced to the boundary-less case: 
% CORRECTION
The boundary is a union of closed cycles. For each of these $m$ cycles $\delta G_i$
just add an other point $P_i$ and add $n=|\delta G_i|$ edge connections from each 
of the cycle boundary points of $G_i$ to $P_i$. This produces a graph $H$ without boundary and 
which contains $G$ as a subgraph. The formula without boundary shows that
$6 \chi(H)$ is the sum of curvatures of the original interior points and 
the sum of the curvatures of the boundary points as well as the sum of the curvatures 
$K_1(P_i) = 6-n_i$ to each newly added point $P_i$:
$$ 6 \chi(H) = \sum_{g \in {\rm int}(G)} K(g) + \sum_{g \in \delta G} K_H(g) + \sum_i (6-n_i)  \; . $$
We also have
$$ \chi(H) = \chi(G) + m-|\delta G|+|\delta G| = \chi(G) + m  \; . $$
For boundary points, $K_H(g) = 6-|S_1(g)|-2$ and 
$K_G(g) = 3-|S_1(g)|$ so that $K_H(g)-K_G(g) = 1$. From the previous boundary less case, we get
$$ 6 \chi(H) = \sum_{g \in H} K_H(g) = \sum_{g \in {\rm int}(G)} K_G(g) + \sum_{i=1}^m \sum_{g \in \delta G_i} (K_G(g) +1) + (6-n_i)  
  =  \sum_{g \in G} K(g) + 6m \;  $$
so that 
$$ \chi(G) = 6 \chi(H)-6m = \sum_{g \in G} K(g) + 6m - 6m =  \sum_{g \in G} K(g) \; . $$ 
\end{proof}

The just verified combinatorial Gauss-Bonnet is entirely {\bf graph theoretical}.
Our curvature definition $K$ was motivated from the notion of {\bf Jacobi fields} 
in the classical case given by second derivatives. 
While "smoothness" requirements" are necessary for the more sophisticated
Gauss-Bonnet formula, the just mentioned metric Gauss-Bonnet holds for any
polyhedron with triangular faces. For example, every finite triangularization of a two-dimensional compact manifold works. 
Let us explain a bit more, why the curvature
$$  K = 2 |S_1| - |S_2| $$ is "differential geometric":
the {\bf Gauss-Jacobi equations} $f'' = -K f$ in differential
geometry require the second differences of a Jacobi field $f$. 
\cite{BergerPanorama}.
Our starting point had been to extend Jacobi fields  to the discrete for numerical purposes:
for a discretized Jacobi field with smallest space step $1$, we have $f''(0) = f(2) - 2 f(1) + f(0)
 = f(2) - 2 f(1)$. The Jacobi equations suggest to call this $-K$. Since $f(k)$ is the variation of the
geodesic when changing the angle, we can integrate over the circle and we get the length $|S_k|$ of the circle
of radius $k$. Therefore $K$ is a multiple of $2 |S_1|-|S_2|$ and there is no reason to normalize this in the discrete. \\
The "first order curvature" $K_1=6-|S_1|$ on the other hand only requires first order differences. 
%There is a large set of 2-dimensional graphs without boundary for which the Gauss-Bonnet formula
%$$  \sum_{v \in V} K(v) = 60 \chi(G)  $$
%holds. 
%But to establish for which graphs it does hold needs some discrete differential geometry to be developed.  
The curvature $K$ has some advantages over the curvature $K_1$: \\

\begin{itemize}
\item the curvature formula for the boundary and in the interior is the same, while for the curvature $K_1$,
   one has to distinguish boundary and interior.  \\
\item there is no reference to a flat background structure for $K$, while $K_1$ refers to the flat situation with 
   via integers $6$ or $3$. \\
\item it can be generalized to more general situations, where the distance in the graph can vary and
   where we have no natural flatness as a reference. We can look for example for distance functions which 
   minimize the total curvature.\\
\item it is more closely rooted to differential geometry of manifolds and classical notions like
   Jacobi fields, a notion which is of "second order" too.  \\
\item it can be adapted to higher dimension, when defining scalar curvature for graphs and where
   no natural "flat triangulated ambient reference graph" exists.  \\
\end{itemize}

To summarize, we think that while $K_1$ is "metric", $K$ has a more "differential geometric" flavor. Similarly as
many metric results extend to the differential geometric setup, things are more restricted also in the 
discrete, if higher order difference notions are used. The limitations of the results are related to similar
limitations we know in the continuum. \\

We can combine the two results: for the Puiseux curvature with radius $2$ defined by
$$   K_2(g) = 12-|S_2(g)| \; , $$ 
we get the following corollary: 

\begin{coro}[K2 formula]
If $G$ is a two-dimensional smooth domain in the triangular tessellation $X$ of the plane, then 
$$ \sum_{g \in C} K_2(g) = 24 \chi(G) \; . $$
\end{coro}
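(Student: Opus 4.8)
The plan is to treat this as what it is called, a corollary: it should fall out of the Discrete Gauss-Bonnet theorem and the Combinatorial Gauss-Bonnet theorem via a single pointwise identity on the boundary, with no new geometric input. First I would check the hypotheses of the two invoked theorems. A finite smooth domain $G$ is, by conditions (i) and (iii), a finite two-dimensional graph whose boundary $C=\delta G$ is one-dimensional, so the Combinatorial Gauss-Bonnet theorem applies and gives $\sum_{g\in G}K_1(g)=6\chi(G)$; and since $G$ is a smooth domain, the Discrete Gauss-Bonnet theorem gives $\sum_{g\in C}K(g)=12\chi(G)$. I would then collapse the first sum onto the boundary: at an interior point $g$ the unit sphere $S_1(g)$ is by definition the full hexagonal sphere of $X$, so $|S_1(g)|=6$ and $K_1(g)=6-|S_1(g)|=0$; by condition (ii) every vertex of $G$ is interior or a boundary point, hence $\sum_{g\in C}K_1(g)=\sum_{g\in G}K_1(g)=6\chi(G)$.

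The heart of the argument is the pointwise identity at a boundary vertex $g$. There $K(g)=2|S_1(g)|-|S_2(g)|$ and, in its boundary form, $K_1(g)=3-|S_1(g)|$; eliminating $|S_1(g)|$ gives
$$K(g)+2K_1(g)=\bigl(2|S_1(g)|-|S_2(g)|\bigr)+\bigl(6-2|S_1(g)|\bigr)=6-|S_2(g)| .$$
Here $6-|S_2(g)|$ is the radius-two Puiseux curvature at the boundary in the normalization that matches $K_1=3-|S_1|$ at the boundary (half the flat interior value $12$, just as $3$ is half of $6$); I would flag this normalization point explicitly, since it is this boundary form, and not the interior form $12-|S_2|$, that reassembles into $K(g)+2K_1(g)$. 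Summing the identity over $g\in C$ and substituting the two totals obtained above yields
$$\sum_{g\in C}K_2(g)=\sum_{g\in C}K(g)+2\sum_{g\in C}K_1(g)=12\chi(G)+2\cdot 6\chi(G)=24\chi(G) ,$$
which is the assertion.

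The main obstacle is essentially bookkeeping rather than mathematics. One must apply the two Gauss-Bonnet results under exactly their stated hypotheses (finiteness, one-dimensionality of the boundary, smoothness of the domain), keep track of the fact that $K_1$ vanishes on the interior so that the full-graph sum reduces to a boundary sum, and handle the interior-versus-boundary normalization of the radius-two curvature consistently. Once those points are settled, the result is pure algebra from two theorems already in hand, and there is no need to revisit the pruning/etching reduction or the hole-filling arguments.
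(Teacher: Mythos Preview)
Your approach is essentially the paper's: combine the discrete Gauss--Bonnet identity $\sum_{g\in C}K(g)=12\chi(G)$ with the combinatorial Gauss--Bonnet identity (reduced to the boundary since $K_1$ vanishes at interior points) via a single pointwise algebraic relation, and read off the $K_2$ total. The only difference is the normalization you already flag: the paper writes the boundary identity as $(12-2|S_1|)+(2|S_1|-|S_2|)=12-|S_2|$, i.e.\ it effectively pairs $K$ with $2(6-|S_1|)$ rather than with $2K_1=2(3-|S_1|)$, and so lands directly on $K_2=12-|S_2|$ as defined, whereas your computation lands on the halved boundary form $6-|S_2|$.
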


\begin{proof}
Since $\sum_g 12-2 |S_1(g)| = 12 \chi(G)$ and $\sum_g 2 |S_1(G)| - |S_2(g)| = 12 \chi(G)$, we get by 
addition $\sum_g 12-|S_2(g)| = 24 \chi(G)$. The left hand side is the combinatorial Puiseux curvature
for radius $r=2$.
\end{proof}

Note that unlike the combinatorial curvature formula $\sum_{g \in G} K_1(g) = 6 \chi(G)$, the $K_2$ formula
is only obvious modulo the main result for "smooth domains" proved here. 
If we wanted to establish Gauss-Bonnet type results for curvatures like $K_3 = 3 S_1- S_3$, 
the restrictions on discrete domains would be even more severe. 

\vfill

\bibliographystyle{plain}

\begin{thebibliography}{10}

\bibitem{BergerPanorama}
M.~Berger.
\newblock {\em A Panoramic View of Riemannian Geometry}.
\newblock Springer Verlag, Berlin, 2003.

\bibitem{princetonguide}
T.~Gowers (Editor).
\newblock {\em The Princeton Companion to Mathematics}.
\newblock Princeton University Press, 2008.

\bibitem{hopf35}
H.~Hopf.
\newblock {\"Uber die Drehung der Tangenten und Sehnen ebener Curven}.
\newblock {\em Compositio Math., 2}, pages 50--62, 1935.

\bibitem{lakatos}
I.~Lakatos.
\newblock {\em Proofs and Refutations}.
\newblock Cambridge University Press, 1976.

\end{thebibliography}
\vspace{1cm}

\vspace{1cm}

\end{document}